\begin{document}
\providecommand{\keywords}[1]{\textbf{\textit{Keywords: }} #1}
\newtheorem{theorem}{Theorem}[section]
\newtheorem{lemma}[theorem]{Lemma}
\newtheorem{prop}[theorem]{Proposition}
\newtheorem{kor}[theorem]{Corollary}
\theoremstyle{definition}
\newtheorem{defi}[theorem]{Definition}
\theoremstyle{remark}
\newtheorem{remark}[theorem]{Remark}
\newtheorem{problem}[theorem]{Problem}
\newtheorem{question}[theorem]{Question}
\newtheorem{conjecture}[theorem]{Conjecture}
\newtheorem{example}[theorem]{Example}
\newtheorem{condenum}[theorem]{Condition}

\newcommand{\cc}{{\mathbb{C}}}   
\newcommand{\ff}{{\mathbb{F}}}  
\newcommand{\nn}{{\mathbb{N}}}   
\newcommand{\qq}{{\mathbb{Q}}}  
\newcommand{\rr}{{\mathbb{R}}}   
\newcommand{\zz}{{\mathbb{Z}}}  
\newcommand{\fp}{{\mathfrak{p}}}

\newcommand\JK[1]{{\color{blue}{#1}}}

\title[Rank gain of Jacobians]{Rank gain of Jacobians over number field extensions with prescribed Galois groups}

\date{\today}
\subjclass[2010]{Primary 11G05, 11R32, 14H40; Secondary 14H30}
\keywords{Elliptic curve; Jacobian variety; root number; function field extensions; Galois theory.}

\thanks{Bo-Hae Im was supported by the National Research Foundation of Korea(NRF) grant
funded by the Korea government(MSIT) (No.~2020R1A2B5B01001835).
}

\author{Bo-Hae Im}
\address{
Department of Mathematical Sciences, KAIST, 291 Daehak-ro, Yuseong-gu, Daejeon, 34141, South Korea, 
and\\
Korea Institute for Advanced Study, Hoegiro 87, Seoul, 130-722, South Korea}

\email{bhim@kaist.ac.kr}
\author{Joachim K\"onig}
\address{Department of Mathematics Education, Korea National University of Education, Cheongju, 28173, South Korea}
\email{jkoenig@knue.ac.kr}
\begin{abstract}
We investigate the rank gain of elliptic curves, and more generally, Jacobian varieties, over non-Galois extensions whose Galois closure has Galois group permutation-isomorphic to a prescribed group $G$ (in short, ``$G$-extensions"). 
In particular, for alternating groups and (an infinite family of) projective linear groups $G$, we show that most elliptic curves over (e.g.) $\mathbb{Q}$ gain rank over infinitely many $G$-extensions, conditional only on the parity conjecture.
More generally, we provide a theoretical criterion which allows to deduce that ``many" elliptic curves gain rank over infinitely many $G$-extensions, conditional on the parity conjecture and on the existence of geometric Galois realizations with group $G$ and certain local properties. 
\end{abstract}
\maketitle

\section{Introduction}

The rank of elliptic curves over number fields has been one of the main subjects in the areas of algebraic number theory and arithmetic geometry. In particular, questions about the rank of quadratic twists of a given elliptic curve $E$ over a number field $K$ have been studied extensively, and are directly related to the question over which quadratic extensions $K(\sqrt{a})/K$ the rank of $E$ grows, upon recalling that for an elliptic curve $E/K$ over a number field $K$ and its quadratic twist $E^a$ by a square-free integer $a$, the rank of $E$ over the quadratic extension $K(\sqrt{a})$ is equal to the sum of the ranks of $E$ and $E^a$ over $K$, which implies that the positive rank of the twist over $K$ gives the rank growth of $E$ over the quadratic extension of $K$. For example, in \cite{RS}, a lower bound of density of twists of elliptic curves over $\mathbb{Q}$ with positive rank has been given, and in \cite{CJ, ILo, I}, the simultaneous rank growth of more than one elliptic curve over given quadratic extensions has been studied.  Some conditional results on the positive rank of twists of elliptic curves, relative to the Birch and Swinnerton-Dyer conjecture and the finiteness of the Tate-Shafarevich group, have been given in \cite{MR2, DD2}.

In this paper, we more generally investigate the rank growth of elliptic curves over extensions of the base field whose Galois closure has a prescribed Galois group. 

Given a transitive permutation group $G\le S_n$, by a $G$-extension $F/K$ we mean an (in general, non-Galois) degree-$n$ extension over $K$ whose Galois closure has Galois group permutation-isomorphic to $G$.

The following are variants (of increasing strength) of the same question of interest:
\begin{question}
\label{ques:strong}
\label{ques:weak}
Let $K$ be a number field and $G\le S_n$ a transitive permutation group.
\begin{itemize}
    \item[a)] Do there exist infinitely many elliptic curves $E_i$ ($i\in I$) over $K$, pairwise non-isomorphic over $\mathbb{C}$, such that each $E_i$ gains rank over infinitely many pairwise linearly disjoint $G$-extensions of $K$?
    \item[b)] Can the infinite family $(E_i)_{i\in I}$ in a) even be assumed to contain ``most" elliptic curves over $K$ (with a suitable way of counting)?
    \item[c)] Can it even be assumed to contain all elliptic curves over $K$?
\end{itemize}
\end{question}

In the key case $K=\mathbb{Q}$, the strongest version Question \ref{ques:strong}c) is known to have a positive answer for all symmetric groups $G=S_n$ (e.g., see \cite{LOT}).

Examples where Question \ref{ques:weak}a) is known to have a positive answer include $G=C_3$, (for example, see~\cite{IW18, DFK04}) or $G=AGL_1(p)$ (the normalizer of an order $p$ subgroup in $S_p$) for $p\ge 3$ (see \cite[Corollary 7]{Dokch};  our translation of the result in \cite{Dokch} merely uses that $AGL_1(p)$ is the Galois group of, e.g., $X^p-q$ for prime numbers $p$ and $q$).
On the other hand for $G=C_p$ with a prime $p\ge 7$, Question \ref{ques:weak}a) is conjectured to have a negative answer (for example, see \cite{MR20}).  

Note that if a $G$-extension $F/K$ has a proper intermediate extension $F_0/K$, then the rank gain of an elliptic curve over $F$ might sometimes occur for trivial reasons (namely, due to the rank gain already over $F_0$). We are therefore primarily interested in the case where such intermediate extensions do not exist. Via the Galois correspondence, this is the case if and only if the point stabilizer $G_1\le G$ is a maximal subgroup inside $G$, or equivalently, if the transitive action of $G$ is in fact a {\it primitive} action. In the same spirit, the condition of linear disjointness is included in Question \ref{ques:strong} 
to exclude certain trivial answers; it is, however, furthermore useful for theoretical purposes, since it guarantees that the rank of the respective elliptic curve becomes infinite over the compositum of these extensions. See, e.g., \cite{I06, ILa08, AGP}  for consideration about infinite rank over such ``large" fields.

\section{Main results}


We attack Question \ref{ques:weak} (in fact, for very broad classes of elliptic curves), using the root number $W(E/K)$ of an elliptic curve defined over $K$. See Section \ref{sec:rootnumber} for more on root numbers; for now it suffices to note that $W(E/K)$ equals $-1$ or $1$, and conditional on the Birch and Swinnerton-Dyer conjecture, equals the parity $(-1)^{rank(E(K))}$ of the rank of $E$ over $K$.

We fix a finite transitive permutation group $G\le S_n$, a number field $K$, an  elliptic curve $E$ over $K$, and ask the following question: As $F/K$ runs through all (degree-$n$) $G$-extensions of $K$, does the sign $W(E/F)$ differ from $W(E/K)$ infinitely often? If so, that means in particular that the analytic rank of $E$ over $F$ is higher than the one over $K$ for infinitely many $F$, and assuming the parity conjecture, the rank of $E$ over $F$ is larger than the one over $K$. Our goal is to obtain a positive answer for as broad a class of elliptic curves as possible, for prescribed groups $G$. %

We focus in particular on non-solvable groups. One of the most successful approaches to realizing such groups as Galois groups over number fields $K$ is via $K$-regular realizations over the function field $K(t)$ (and subsequent application of Hilbert's irreducibility theorem), via, e.g., the so-called rigidity method (here, a field extension $L/K(t)$ is called $K$-regular, if $K$ is algebraically closed in $L$; cf.\ Section \ref{sec:basic_ext}).
We provide the following general criterion, which enables us to give positive answers to Question~\ref{ques:weak} for groups $G$ conditional on $K$-regular realizations over $K(t)$, possibly with certain local conditions.

\begin{theorem}
\label{thm:grunwald}
Let $K$ be a number field, and let $G\le S_n$ be a transitive permutation group occurring as the Galois group of a $K$-regular Galois extension $L/K(t)$. 
 Assume that one of the following holds:
\begin{itemize}
\item[i)] $G$ is not contained in $A_n$, or
\item[ii)] there exists a $K$-rational 
 branch point $t_i\in \mathbb{P}^1(K)$ of $L/K(t)$ with the following property: if $I\trianglelefteq D$ denote the inertia and decomposition group at $t\mapsto t_i$ in $L/K(t)$, there exists $x\in D$ such that $n-|orb(\langle I,x\rangle)|$ is odd, where $orb(H)$ is the set of orbits of a subgroup $H\le S_n$.
\end{itemize}
Then under the parity conjecture, there exists a positive density set $\mathcal{S}$ of primes of $K$ such that, for each $p\in \mathcal{S}$, each elliptic curve over $K$ with multiplicative reduction at $p$ gains rank over infinitely many pairwise linearly disjoint $G$-extensions of $K$.
\end{theorem}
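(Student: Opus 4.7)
The plan is to realize $G$-extensions of $K$ via Hilbert specialization of the given $K$-regular extension $L/K(t)$ and to pin down the local behavior at a single prime $p$ of multiplicative reduction of $E$ by means of Krasner's lemma. Writing
\[
\frac{W(E/F)}{W(E/K)} = \prod_q W_q(E,F), \qquad W_q(E,F):= \prod_{v\mid q} \frac{W(E/F_v)}{W(E/K_q)},
\]
we have $W_q(E,F)=1$ whenever $F/K$ is unramified at $q$ and $E$ has good reduction at $q$. The strategy is to arrange a specialization $t_0\in K$ so that the resulting $F=F_{t_0}$ is a $G$-extension, $F/K$ is unramified at every bad-reduction prime of $E$ other than $p$, and the remaining factor $W_p(E,F)$ equals $-1$. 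Assuming the parity conjecture, this forces a rank gain of $E$ over $F$.

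Under case (ii), fix $x\in D$ with $n-|orb(\langle I,x\rangle)|$ odd, and take $p$ to be a prime whose Frobenius in the residue-field extension of $L/K(t)$ above $t_i$ corresponds to $x$ modulo $I$; by Chebotarev this defines a positive-density set of candidate primes. Choosing $t_0$ $p$-adically sufficiently close to $t_i$, Krasner's lemma forces the completed extension $L_{t_0}\otimes_K K_p$ to have inertia conjugate to $I$ and decomposition group conjugate to $\langle I,x\rangle$. The primes of $F_{t_0}$ above $p$ then correspond to the orbits of $\langle I,x\rangle$ on $G/H$ (with $H$ the point stabilizer), with orbit lengths equal to the local degrees $[F_{t_0,v}:K_p]$. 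Since $E$ retains multiplicative reduction over each $F_{t_0,v}$, the local root number $W(E/F_{t_0,v})$ is determined by parity data of its associated orbit, and an explicit product calculation shows that the overall sign of $W_p(E,F_{t_0})$ is governed by $n-|orb(\langle I,x\rangle)|$, hence equals $-1$.

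Under case (i), one exploits the sign character $G\to \{\pm 1\}$ coming from $G\not\subseteq A_n$: the Galois closure of $F_{t_0}/K$ then contains a quadratic subfield $K(\sqrt{d_{t_0}})$, and by a suitable specialization argument the $p$-adic valuation of $d_{t_0}$ can be made odd, so that this quadratic subfield is ramified at $p$. Because $E$ has multiplicative reduction there, the associated quadratic-twist contribution to $W(E/F_{t_0})$ at $p$ is $-1$. The auxiliary requirement that $F_{t_0}/K$ be unramified at every other bad-reduction prime $q$ is handled by prescribing $t_0$ modulo appropriate powers of $q$ (for instance, by pushing $t_0$ $q$-adically close to a non-branch point of $L/K(t)$) and combining finitely many such local conditions via the Chinese remainder theorem, staying inside a Hilbert subset that guarantees the full Galois group $G$. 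Infinitely many pairwise linearly disjoint $F_{t_0}$ are then extracted by further varying $t_0$ at auxiliary primes, following the standard Hilbert-irreducibility recipe.

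The main obstacle is the explicit local root number computation under (ii): proving $W_p(E,F_{t_0})=-1$ from the orbit-parity hypothesis requires carefully tracking how $W(E/F_v)$ depends on the local extension $F_v/K_p$ when $E$ has multiplicative reduction, including the wildly ramified regime. The natural tool is the Brauer-relation machinery of Dokchitser--Dokchitser applied to the permutation representation of $G$ on $G/H$, combined with the explicit formulas for local root numbers of multiplicative-reduction elliptic curves in terms of the Galois action on the Tate module.
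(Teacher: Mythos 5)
Your overall machinery (Hilbert specialization of $L/K(t)$, Krasner/weak approximation to pin down local behavior, Chebotarev to get a positive-density set of primes $p$ via the Frobenius in the residue extension at the branch point) matches the paper's. But the core of your argument --- producing a \emph{single} specialization $F=F_{t_0}$ with $W(E/F)\ne W(E/K)$ by making the contribution at $p$ equal to $-1$ and all other contributions trivial --- has a genuine gap, in two places. First, unramifiedness of $F/K$ at the other bad primes $q$ (and no condition at all at the archimedean places, which you never mention) does \emph{not} make those factors of $W(E/F)/W(E/K)$ equal to $1$: if $q$ is a real place or a prime of split multiplicative reduction and $F/K$ is unramified there with decomposition group $D_q$, the contribution is $(-1)^{|orb(D_q)|-1}$, which is uncontrolled. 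Second, even at $p$ the hypothesis ``$n-|orb(\langle I,x\rangle)|$ odd'' does not give $\prod_{v\mid p}W(E/F_v)/W(E/K_p)=-1$: in the split multiplicative case that ratio is $(-1)^{|orb(\langle I,x\rangle)|-1}=(-1)^{n}$ (so it is $+1$ for $n$ even), and in the non-split case it is governed by a different orbit statistic (the number of $D$-orbits splitting into an even number of $I$-orbits). There is no uniform local recipe at $p$ alone that flips the sign for every $E$ with multiplicative reduction at $p$.

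The paper avoids both problems by a \emph{comparison of two} specializations rather than a comparison with the base field: it produces $F_1/K$ unramified at $p$ with Frobenius in $G\cap A_n$, and $F_2/K$ with inertia $I$ and decomposition group $\langle I,x\rangle$ at $p$, arranged (via weak approximation, Proposition 3.4) to have \emph{identical} completions at every archimedean place and every bad prime $q\ne p$ of $E$. All contributions away from $p$ then cancel in $W(E/F_1)/W(E/F_2)$, and the opposite parities of $n-|orb(D_p)|$ (resp.\ of the relevant orbit count in the non-split case, which reduces to the same condition because $I$ and the Frobenius of $F_1$ lie in $A_n$) force $W(E/F_1)\ne W(E/F_2)$; hence $E$ gains rank over at least one of each such pair, which suffices for the theorem. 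You would need to restructure your proof along these lines (this is Proposition 4.3 of the paper); as written, the claimed identity $W_p(E,F_{t_0})=-1$ and the neglect of the archimedean and remaining bad places cannot be repaired without it. Your treatment of case (i) via forcing the quadratic resolvent to ramify at $p$ is likewise unnecessary and unsubstantiated (ramified quadratic twists at multiplicative primes do not contribute a clean $-1$ without further analysis); the paper instead simply prescribes unramified Frobenius classes of odd and even sign at $p$ for the two specializations, using the D\`ebes--Ghazi theorem.
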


Theorem \ref{thm:grunwald} means in particular that groups not contained in $A_n$ are typically easier to deal with; see also Theorem \ref{thm:generic}. Our remaining applications are thus for subgroups of $A_n$.
Regarding the somewhat technical Case ii), we emphasize that it may often be verified purely theoretically, without relying on an explicit polynomial for the Galois extension $L/K(t)$. See Section \ref{sec:firstex} for a first demonstration.
As a broader application of Theorem \ref{thm:grunwald}, we show the following result, providing rank gain over extensions of degree $p+1$ with Galois group (of the Galois closure) equal to $PSL_2(p)$ in its natural action.

\begin{theorem}
\label{thm:psl}
\label{thm:psl_strong}
Let $K$ be a number field not containing $\sqrt{-1}$, let $p$ be a prime number congruent to $3$ mod $8$, and let 
 $G=PSL_2(p)\le S_{p+1}$.
Then under the parity conjecture, for all but finitely many primes $q$ of $K$ inert in $K(\sqrt{-1})/K$ (with the exceptional set depending on $p$), the following holds:\\
Every elliptic curve over $K$ with multiplicative reduction at $q$ gains rank over infinitely many (pairwise linearly disjoint) degree-$p+1$ $G$-extensions of $K$.
\end{theorem}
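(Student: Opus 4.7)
The plan is to apply Theorem~\ref{thm:grunwald}, case~(ii), to a classical Shih-type $K$-regular realization of $G = PSL_2(p)$ in its natural action on $\mathbb{P}^1(\mathbb{F}_p)$. The hypothesis $p \equiv 3 \pmod 8$ ensures $\left(\tfrac{2}{p}\right) = -1$, which is precisely what Shih's classical construction needs to produce a $\mathbb{Q}$-regular Galois extension $L/\mathbb{Q}(t)$ with group $G$; base change to $K(t)$ preserves $K$-regularity. Case~(i) of Theorem~\ref{thm:grunwald} is unavailable: for $p \equiv 3 \pmod 4$, involutions of $G$ are fixed-point-free on $\mathbb{P}^1(\mathbb{F}_p)$ and decompose into $(p+1)/2 \equiv 2 \pmod 4$ transpositions, so $G \le A_{p+1}$.

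I verify case~(ii) at a $K$-rational branch point of $L/K(t)$ where the geometric inertia $I$ is generated by an involution $\tau \in G$, namely the branch point arising (in Shih's construction) from the CM $j$-invariant $j = 1728$ of $E_0: y^2 = x^3 - x$, whose order-$4$ endomorphism $(x,y) \mapsto (-x, \sqrt{-1}\, y)$ is defined only over $K(\sqrt{-1})$. Because $\sqrt{-1} \notin K$, the residue field at a place of $L$ above this branch point is a nontrivial extension of $K$ containing $K(\sqrt{-1})$, forcing the decomposition group $D$ to strictly contain $I$, with $D/I$ surjecting onto $\mathrm{Gal}(K(\sqrt{-1})/K) \cong \mathbb{Z}/2$. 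Pick $x \in D$ lifting the nontrivial element.

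The key group-theoretic step: for $p \equiv 3 \pmod 4$, $N_G(\langle\tau\rangle) = C_G(\tau)$ is dihedral of order $p+1$, i.e., the normalizer of a non-split torus $T$ of order $(p+1)/2$, with $\tau$ as its central involution. Since $I \trianglelefteq D \le C_G(\tau)$ with $D/I$ cyclic and nontrivial, a short analysis of subgroups of $D_{p+1}$ with cyclic quotient by the center shows $D$ is either cyclic inside $T$ (``case A'') or a Klein four-group $\{1, \tau, x, \tau x\}$ with $x$ an involution outside $T$ (``case B''). In case~A, one checks that $|orb(\langle I, y\rangle)|$ is always even on $\mathbb{P}^1(\mathbb{F}_p)$ since $(p+1)/2 \equiv 2 \pmod 4$ contains only one factor of $2$; so only case~B can satisfy (ii). Granted that the Frobenius description above places $D$ in case~B, all three non-identity elements of $D$ are fixed-point-free involutions of $G$, and Burnside gives
\[
|orb(\langle I, x\rangle)| = \tfrac{1}{4}\bigl((p+1) + 0 + 0 + 0\bigr) = \tfrac{p+1}{4},
\]
which is odd precisely because $p \equiv 3 \pmod 8$. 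Hence $n - |orb(\langle I, x\rangle)| = 3(p+1)/4$ is odd, confirming condition~(ii).

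The main obstacle is the arithmetic step: verifying that Shih's cover indeed produces a decomposition group of the Klein-four shape (case~B) at the $j=1728$ branch point, and, secondly, tracing the Chebotarev/Grunwald-Wang step in the proof of Theorem~\ref{thm:grunwald} to identify the resulting density set $\mathcal{S}$ with the primes of $K$ inert in $K(\sqrt{-1})/K$ (up to finitely many primes of bad reduction of $L/K(t)$). Both follow from the explicit moduli-theoretic description of Shih's cover together with the fact that $D/I \cong \mathrm{Gal}(K(\sqrt{-1})/K)$, so that a prime $q$ of $K$ lies in $\mathcal{S}$ exactly when its Frobenius in $K(\sqrt{-1})/K$ is non-trivial, i.e., $q$ is inert. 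Applying Theorem~\ref{thm:grunwald}(ii) then yields the statement.
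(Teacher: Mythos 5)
Your proposal is correct and follows essentially the same route as the paper: the Shih/Malle--Matzat $K$-regular realization with branch cycle type $(2,p,p)$, the fact that the residue field at the order-$2$ branch point contains $\sqrt{-1}$ (which the paper sources from the proof of Theorem 5.3 in \cite{KLN19} rather than from the modular interpretation), Theorem \ref{thm:spec_fctfd} to force inertia of order $2$ and decomposition group of order divisible by $4$ at primes inert in $K(\sqrt{-1})$, and the orbit-parity count inside the regularly acting dihedral centralizer of an involution. The paper's orbit count is slightly more uniform (any subgroup $H$ of the centralizer with $4\mid |H|$ has the odd orbit number $(p+1)/|H|$, with no need to pin down $H$ as a Klein four-group), but your Burnside computation reaches the same conclusion.
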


For the case $G=A_n$, we reach an even stronger conclusion.
\begin{theorem}
\label{thm:an_cond}
Let $K$ be a number field, and let $E$ be an elliptic curve over $K$ having at least one prime of multiplicative reduction. 
Then under the parity conjecture, for each $n\ge 4$, there exist infinitely many pairwise linearly disjoint degree-$n$ extensions $F/K$ with Galois group $A_n$ such that $E$ gains rank over $F$.

\end{theorem}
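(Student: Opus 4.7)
The plan is to apply Theorem~\ref{thm:grunwald}, Case ii), to a carefully chosen $K$-regular realization of $A_n$ over $K(t)$, and then to refine the argument so that the prescribed prime $q$ of multiplicative reduction of $E$ can be placed into the density set $\mathcal{S}$ produced by that theorem (rather than having to rely on $E$ having multiplicative reduction at a prime in a fixed $\mathcal{S}$).

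First, I would invoke the classical existence of $K$-regular Galois realizations of $A_n$ over $K(t)$, available for every $n \ge 4$ and every number field $K$ (via Noether's construction over $\mathbb{Q}$ followed by base change, or via the rigidity method applied to known rigid triples in $A_n$). Among such realizations, I would select one with a $K$-rational branch point $t_i \in \mathbb{P}^1(K)$ whose inertia $I$ is generated by a $3$-cycle, say $I = \langle (1,2,3) \rangle$. This is possible because $3$-cycles form a $K$-rational conjugacy class of $A_n$ for $n \ge 5$ (with a minor adjustment for $n=4$, where one can instead use a double-transposition inertia).

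Next, I would verify the parity condition in Case ii). Case i) is inapplicable since $A_n \subseteq A_n$. For any $x \in I$, the orbits of $\langle I,x\rangle$ on $\{1,\ldots,n\}$ are $\{1,2,3\}$ together with $n-3$ singletons, so $n-|\mathrm{orb}\langle I,x\rangle|=2$ is even; hence any witness $x$ must lie in $D \setminus I$, forcing $D \supsetneq I$. A convenient candidate (for $n \ge 5$) is $x = (1,2)(4,5) \in A_n$, which normalizes (in fact inverts) $I$, and produces orbits $\{1,2,3\}, \{4,5\}$ together with the singletons $\{6\},\ldots,\{n\}$; then $n - |\mathrm{orb}\langle I,x\rangle| = 3$ is odd, as required. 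For $n=4$, the analogue $x = (1,2)(3,4)$ works, with $\langle I,x\rangle = A_4$ acting transitively (one even orbit of size $4$).

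The main obstacle I anticipate is twofold. (a) Ensuring that the decomposition group $D$ at $t_i$ in the chosen realization actually contains such an $x$; I would secure this by requiring the branch-cycle data of the realization to be rich enough, e.g.\ arranging $D = N_{A_n}(I) = (C_3 \times A_{n-3})\cdot\langle(1,2)(4,5)\rangle$, which does contain $(1,2)(4,5)$, via an appropriate choice (or mild twist) of the realization. (b) Upgrading Theorem~\ref{thm:grunwald}'s conclusion from a positive-density set $\mathcal{S}$ of primes to any prescribed prime $q$ of multiplicative reduction of $E$. My approach is to tailor the realization to $q$: the set $\mathcal{S}$ is characterized by a Frobenius condition in the residue-field extension $\kappa(\mathfrak{P})/K$ at a place $\mathfrak{P}$ of $L$ above $t_i$, and by replacing the realization with a base change through an auxiliary (at most quadratic) extension $K'/K$ in which $q$ has the prescribed Frobenius, one arranges $q \in \mathcal{S}$ for the resulting realization. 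The local specialization $t_0 \equiv t_i \pmod{q}$, together with a suitable higher-order approximation, controls the Frobenius at $q$ in $D/I$; combining this with weak approximation and Hilbert irreducibility (as in the proof of Theorem~\ref{thm:grunwald}) then yields infinitely many pairwise linearly disjoint $A_n$-specializations, each producing a root-number change at $q$ and no uncontrolled change elsewhere, hence the desired rank gain of $E$ under the parity conjecture.
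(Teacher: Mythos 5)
Your group-theoretic setup (a $3$-cycle inertia $I=\langle(1,2,3)\rangle$ with $x=(1,2)(4,5)$ inverting it, giving $n-|orb\langle I,x\rangle|=3$) is fine for $n\ge 5$, but the overall strategy has a genuine gap at step (b), and it is precisely the point where the paper departs from Theorem \ref{thm:grunwald}. Any argument routed through Theorem \ref{thm:spec_fctfd} imposes two restrictions on the prime $q$: it must avoid a finite bad set $\mathcal{S}_0$ depending on the realization, and its Frobenius in the \emph{fixed} residue extension $M/K$ at the branch point must land in the coset $xI$ of $D/I$. Since $x\in I$ yields $\langle I,x\rangle=I$ with even defect, the identity coset is never admissible, so every prime splitting completely in $M/K$ (a positive-density set, and $M\supsetneq K$ is forced, e.g.\ $M\supseteq K(\zeta_3)$ for $3$-cycle inertia) is excluded. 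Your proposed repair --- base change through a quadratic $K'/K$ --- does not resolve this: base-changing the realization produces specializations that are extensions of $K'$ rather than $K$, and the residue field at a branch point of a $K$-regular realization is intrinsic to that realization, not something you can retune to an arbitrary prescribed $q$. So your method proves (a version of) Theorem \ref{thm:grunwald} for $A_n$, i.e.\ a positive-density set $\mathcal{S}$ of admissible primes, but not the stronger statement of Theorem \ref{thm:an_cond}, which must accommodate \emph{every} prime of multiplicative reduction of an arbitrary $E$.

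The paper circumvents this entirely by abandoning the branch-point mechanism: via Lemma \ref{lem:localglobal} it first builds, by Grunwald--Wang, a $V_4$-extension of $K$ with decomposition group $\langle(1,2)(3,4),(1,3)(2,4)\rangle$ and inertia $\langle(1,2)(3,4)\rangle$ at the given prime $q$ (and split behaviour at the other bad places), then uses Mestre's construction to realize this extension as the fiber at the \emph{unramified} point $t\mapsto 0$ of an $A_n$-extension of $K(t)$, and finally specializes $t_0$ $p$-adically close to $0$ (Krasner plus Proposition \ref{prop:hilbert_wa}). Prescribing an unramified fiber to be an arbitrary extension with group inside $A_n$ is exactly the flexibility your approach lacks. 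Two smaller points: for $n=4$ your fallback is inconsistent as written, since $x=(1,2)(3,4)$ does not normalize $\langle(1,2,3)\rangle$ (and indeed $N_{A_4}(\langle(1,2,3)\rangle)=\langle(1,2,3)\rangle$, so $D=I$ and no admissible $x$ exists for $3$-cycle inertia); you would need double-transposition inertia with $D=V_4$, which is essentially the paper's configuration. Also, arranging $D=N_{A_n}(I)$ at the chosen branch point is asserted but not justified; the decomposition group is dictated by the residue extension, which again you cannot freely prescribe.
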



The proofs of the above results are contained in Section \ref{sec:pf_main}. The general idea of proof is to decompose the problem into two parts: a purely group-theoretical condition, and a problem about realization of the prescribed group $G$ as a Galois group with certain local conditions. One way to guarantee the latter is to use previous results about the local behavior of specializations of Galois extensions of function fields such as \cite[Theorem 4.1]{KLN19}, which is a main ingredient for the proof of Theorem \ref{thm:grunwald}. 

In fact, the proofs yield an asymptotic result. In all cases above, there exists a constant $a>0$ (depending on $G$ and $K$, but {\it not} on the concrete elliptic curve $E$!) satisfying the following:  
the number of $G$-extensions $F/K$ with relative discriminant of norm $\le B$ such that $E$ gains rank over $F$ is asymptotically at least $B^a$, cf.\ Remark \ref{rem:density}.

For some small alternating and linear groups, we also give explicit sample polynomials over whose root fields the rank of a prescribed elliptic curve will grow. In Section~\ref{sec:density}, we briefly discuss the frequency of the elliptic curves satisfying the conditions of the above theorems; it turns out that most elliptic curves over $\mathbb{Q}$, when ordered by height, satisfy the assumptions, meaning that the results may be interpreted as answers to Question~\ref{ques:strong}b). It should be noted that there are known methods to obtain {\it unconditional} rank gain results for elliptic curves $E$ in certain scenarios; notably, via the construction of covers $E\to \mathbb{P}^1$ with Galois group $G$. These have been used for special cases in, e.g., \cite{IW18} and \cite{LOT}. However, this approach is only available for rather restricted classes of groups (and then in general suffices only to answer the ``weak" Question~\ref{ques:weak}a)). Finally, note that in order to attack the ``strongest" Question~\ref{ques:strong}c) in full generality, root number considerations alone cannot be expected to suffice either; e.g., already for the case $G=C_2$, there are known examples of elliptic curves over number fields with the same root number over all quadratic extensions \cite{DD2}. 

In Section \ref{sec:abvar}, we use work of Brumer et al. \cite{BKS} to give a generalization from elliptic curves to Jacobians of curves of arbitrary genus (although with certain technical assumptions). This allows us to generalize results such as Theorem \ref{thm:an_cond} to, e.g., Jacobians of hyperelliptic curves. Concretely, we show:

\begin{theorem}
\label{thm:hyper}
Assume the parity conjecture for abelian varieties. 
Let $f(X)\in \zz[X]$ be a separable polynomial of degree $d\ge 5$, and $C$ be the hyperelliptic curve $Y^2=f(X)$. Assume that there exists an odd prime $p$ such that $f$ mod $p$ is of degree $d$ and factors as $(X-a)^2g(X)$ where $g$ is separable and coprime to $X-a$.\footnote{This condition is automatically fulfilled if $p$ is a simple root of the discriminant $\Delta(f)$.} Then 
for every integer $n\ge 4$, there exist infinitely many (degree-$n$) $A_n$-extensions of $\mathbb{Q}$ over which the Jacobian $J(C)$ gains rank.
\end{theorem}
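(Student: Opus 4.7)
The plan is to run the argument used for Theorem~\ref{thm:an_cond} essentially verbatim, with the local root number at a prime of multiplicative reduction of an elliptic curve replaced by its analogue for $J(C)$ at the prime $p$. The required generalization of the local root-number computation is supplied by Brumer--Kramer--Silverberg \cite{BKS} (this is the promised input of Section~\ref{sec:abvar}), so the proof will consist of (a) checking that the reduction hypothesis on $f$ places us in the setting where \cite{BKS} applies with a clean sign-flip, and (b) invoking the function-field construction already used to prove Theorem~\ref{thm:an_cond}.

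For (a), the assumption $f\bmod p=(X-a)^2 g(X)$ with $g$ separable and $g(a)\ne 0$ is precisely the condition that $C$ acquires a single ordinary double point mod $p$, defined over $\mathbb{F}_p$. Hence $J(C)$ has semistable reduction at $p$ with a $1$-dimensional split toric part and abelian part equal to the Jacobian of the normalized curve, of dimension $g-1$, where $g=\lfloor(d-1)/2\rfloor$. Under base change to a finite extension $F_\fp/\mathbb{Q}_p$, the formulas of \cite{BKS} express the ratio $W(J(C)/F_\fp)/W(J(C)/\mathbb{Q}_p)$ as a product of an abelian contribution (which will be held constant by our specializations) and a toric contribution which obeys exactly the same orbit-count rule as the multiplicative-reduction contribution for an elliptic curve. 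In particular, the sign-flip criterion available in the elliptic case carries over to $J(C)$.

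For (b), apply the construction behind Theorem~\ref{thm:an_cond} (i.e., Theorem~\ref{thm:grunwald}(ii) combined with \cite[Theorem~4.1]{KLN19}) to produce infinitely many pairwise linearly disjoint degree-$n$ $A_n$-extensions $F/\mathbb{Q}$ whose completion at $p$ has the exact local shape required to flip the toric contribution, while being unramified at every other prime of bad reduction of $J(C)$ (of which there are only finitely many). Combining with (a) yields $W(J(C)/F)=-W(J(C)/\mathbb{Q})$ for each such $F$, and the parity conjecture for abelian varieties then forces the rank of $J(C)(F)$ to strictly exceed that of $J(C)(\mathbb{Q})$.

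The main obstacle is the precise form of (a): one must track which terms of the \cite{BKS} formula actually vary under our specializations, and verify that the variable terms are exactly the toric ones. Concretely, the abelian-part root-number contribution has to be shown invariant under the tame base extensions we impose at $p$; and the archimedean contributions for higher-genus Jacobians, which are more intricate than for elliptic curves, are constant in $F$ provided the archimedean decomposition type of $F/\mathbb{Q}$ is also controlled through the function-field construction. Once these separations are in place, the sign flip at $p$ is the only nontrivial change, and the elliptic argument goes through unchanged.
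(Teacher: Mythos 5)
Your high-level plan matches the paper's: both arguments deduce the hyperelliptic case by re-using the $A_n$-construction from Theorem~\ref{thm:an_cond} and controlling the product of local root numbers of $J(C)$ at primes over $p$ through the Brumer--Kramer--Sabitova formula, while pinning down the local behavior at every other bad prime and at the archimedean place via Proposition~\ref{prop:hilbert_wa}. However, the proposal has two genuine gaps, both concentrated precisely in the part you yourself flag as the ``main obstacle'' and then defer.

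First, your assertion that $J(C)$ has a ``$1$-dimensional \emph{split} toric part'' at $p$ is unjustified and, in general, false. The node at $X=a$ is $\mathbb{F}_p$-rational, but its two branches correspond to $\pm\sqrt{g(a)}$; they are fixed by Frobenius only when $g(a)$ is a square modulo $p$. The hypothesis of the theorem does not control this, so both the split and non-split case must be handled. In the paper this is why condition~iii) of Lemma~\ref{lem:jac_techn} is phrased as an ``either/or'' (either $\tau_p([z])=-1$, or $[z]$ has odd residue degree), and in the hyperelliptic setting it is the \emph{second} alternative (odd residue degree, since $a\in\mathbb{F}_p$) that saves the day, regardless of the sign of $\tau$.

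Second, and more seriously, your claim that the toric contribution ``obeys exactly the same orbit-count rule as the multiplicative-reduction contribution for an elliptic curve'' is not correct as stated, and this is exactly what the paper's Lemma~\ref{lem:jac_techn} establishes through a non-trivial case analysis. Under base change from $k_p$ to $k_{\mathfrak q}$, the BKS formula $W_p(A)=(-1)^{s_k+1}\prod_{[z]}(-\tau([z]))$ forces one to track how $\Sigma_k$, the residue degrees $d(z)$, and the signs $\tau$ all change simultaneously, and the resulting criterion is not a pure orbit count: in the case $\omega([z])=+1$ the argument only goes through under the extra constraint that no prime above $p$ has residue degree divisible by $4$ (condition~2 of Lemma~\ref{lem:jac_techn}). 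It is a fortunate and checkable feature of the Klein four-group decomposition groups produced in the proof of Theorem~\ref{thm:an_cond} that all residue degrees over $p$ are $1$ or $2$, so that both conditions~1) and 2) of Lemma~\ref{lem:jac_techn} are met; but this must be verified, not asserted by analogy with the elliptic case. Without the analogue of Lemma~\ref{lem:jac_techn} and without this residue-degree check, the ``sign flip at $p$'' is not established.
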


See also Theorem \ref{thm:semist_jac} for a more general criterion.

\section{Prerequisites}

\subsection{Completion and specialization of $G$-extensions}
\label{sec:basic_ext}
Let $F$ be a field and $L/F$ a separable (but not necessarily Galois) extension of degree $n$ with Galois closure $\Omega/F$. We call $L/F$ a $G$-extension if $Gal(\Omega/F)$ is a permutation group isomorphic to $G$ via the usual action on conjugates of $L$. 

Now let $R$ be a Dedekind domain with field of fractions $F$. Let $p$ be a prime ideal of $R$ with perfect residue field, which we denote by $F_p$.
Denote by $L_p$ the direct product of residue fields $L_{\mathfrak{p}}$ where $\mathfrak{p}$ runs through the prime ideals extending $p$ in the integral closure of $R$ in $L$. This is an \'etale algebra (in the case where $L/F$ is Galois, it is a direct product of isomorphic Galois extensions, and for most purposes, we may simply work with one of those extensions and refer to this as $L_p$). In the same way, the completion $\widehat{L_p}$ at $p$ may be defined as the direct product of completions $\widehat{L_{\mathfrak{p}}}$ at places $\mathfrak{p}$ extending $p$. By the decomposition group of (the Galois closure) $L/F$ at $p$, we mean the decomposition group at any prime extending $p$, which yields a subgroup of $Gal(\Omega/F)$ well-defined up to conjugacy. Notably, the degrees of the completions $\widehat{L_\mathfrak{p}}/\widehat{F_p}$ equal the orbit lengths of the decomposition group at $p$ in the Galois closure $\Omega/F$ of $L/F$.

We are particularly interested in the case where $F=k(t)$ is a rational function field.
A $G$-extension $L/k(t)$ is called {\it $k$-regular}, 
if the algebraic closure of $k$ inside $L$ equals $k$.

Now let $k$ be of characteristic zero, and let $L/k(t)$ be a $k$-regular $G$-extension with Galois closure $\Omega/k(t)$. 
For each $t_i\in \overline{k}\cup\{\infty\}$, the {\it ramification index} of $\Omega/k(t)$ at $t_i$ is the minimal positive integer $e_i$ such that $\Omega$ embeds into $\overline{k}(((t-t_i)^{1/e_i}))$. Here, if $t_i=\infty$, one should replace $t-t_i$ by $1/t$.  If the ramification index is larger than $1$, then $t_i$ is called a {\it branch point} of $L/k(t)$.
The set of branch points is always a finite set.
To each branch point $t_i$ (of ramification index $e_i$), we may associate a well-defined conjugacy class $C_i$ of $G$, corresponding to the automorphism $(t-t_i)^{1/e_i}\mapsto \zeta (t-t_i)^{1/e_i}$ of 
$\overline{k}(((t-t_i)^{1/e_i}))$, where $\zeta$ is a primitive $e_i$-th root of unity. The ramification index then equals the order of any element in the class $C_i$. 

For any $t_0\in k\cup\{\infty\}$, 
the {\it specialization} of $L/k(t)$ at $t_0$, denoted by $L_{t_0}/k$ is the residue extension of $L/k(t)$ at $t\mapsto t_0$ (where the implicit Dedekind domain $R\subset k(t)$ should be taken as the local ring corresponding to the place $t\mapsto t_0$).

Now let $K$ be a number field. For our purposes, it is important to gain information about the local behaviour of specializations $F_{t_0}/K$ at primes of $K$. The following two results, ensuring realizability of certain unramified, resp., ramified local behaviours, are crucial for us. The first one is a special case of \cite[Theorem 1.2]{DG12}, the second one is  \cite[Theorem 4.1 and Remark 4.2(1)]{KLN19}.

\begin{theorem}
\label{thm:dg}
Let $L/K(t)$ be a $K$-regular Galois extension with Galois group $G$. There exists a finite set $\mathcal{S}_0 = \mathcal{S}_0(L/K(t))$ of primes of $K$, depending only on $L/K(t)$, such that the following holds. If $p$ is any prime of $K$ not contained in $\mathcal{S}_0$ and $C$ is any conjugacy class of $G$, then there exist infinitely many $t_0\in K$ such that the specialization $L_{t_0}/K$ is unramified at $p$ with Frobenius class $C$.
\end{theorem}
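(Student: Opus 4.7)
The plan is to prove the theorem via reduction modulo $p$ combined with the function field version of the Chebotarev density theorem. I would first define $\mathcal{S}_0$ to be the finite set of primes $p$ of $K$ at which $L/K(t)$ fails to have good reduction: namely, primes dividing the discriminant of a chosen integral model of $L/K(t)$, primes whose residue characteristic divides some ramification index of $L/K(t)$, and finitely many additional ``small'' primes to be specified below. For $p \notin \mathcal{S}_0$, standard results on the reduction of tame Galois covers (forming the technical backbone of \cite{DG12}) ensure that the reduction $\overline{L}/\mathbb{F}_{\mathfrak{p}}(t)$ is again an $\mathbb{F}_{\mathfrak{p}}$-regular Galois extension whose Galois group is canonically isomorphic to $G$, and that the Frobenius at a residual specialization coincides with the Frobenius of an appropriate $p$-adic specialization.

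Next, I would invoke the Chebotarev density theorem for the function field $\mathbb{F}_{\mathfrak{p}}(t)$. Writing $q = |\mathbb{F}_{\mathfrak{p}}|$, it yields that the number of degree-one places $\overline{t_0} \in \mathbb{F}_{\mathfrak{p}}$ at which $\overline{L}/\mathbb{F}_{\mathfrak{p}}(t)$ is unramified with Frobenius in the prescribed conjugacy class $C \subset G$ is
\[
\frac{|C|}{|G|}\, q + O(\sqrt{q}),
\]
the implied constant depending only on $|G|$ and the genus of the Galois closure, both of which are bounded uniformly in $p$. Thus, as soon as $q$ exceeds a constant determined by $L/K(t)$, such a $\overline{t_0}$ exists; the finitely many primes violating this quantitative bound are absorbed into $\mathcal{S}_0$.

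Having obtained $\overline{t_0}$, I would lift it arbitrarily to some $t_0 \in \mathcal{O}_K$ with $t_0 \equiv \overline{t_0} \pmod{p}$. Because $p$ has good reduction and $\overline{t_0}$ avoids the reduced branch locus, a Hensel-type argument shows that the specialization $L_{t_0}/K$ is unramified at $p$ and its Frobenius at $p$ matches the Frobenius of $\overline{t_0}$ in $\overline{L}/\mathbb{F}_{\mathfrak{p}}(t)$, which is $C$. To produce the required infinite family, one simply replaces $t_0$ by $t_0 + p^m s$ for arbitrary $s \in \mathcal{O}_K$ and a large enough integer $m$: all such values share the same reduction mod $p$ and hence the same local behavior at $p$, while being pairwise distinct.

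The main obstacle, and the genuine content of \cite[Theorem 1.2]{DG12}, is the ``good reduction'' statement in the first paragraph: one must show that for all but finitely many $p$, the integral closure is \'etale outside the reduced branch divisor, that the Galois group of the reduction is not a proper quotient of $G$, and that the identification of Frobenius classes under reduction is compatible with the $p$-adic specialization map. Once these foundational facts are in place, the proof strategy outlined above is essentially formal.
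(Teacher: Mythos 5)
The paper gives no proof of this statement at all: it is imported verbatim as a special case of \cite[Theorem 1.2]{DG12}, so there is nothing internal to compare against. Your outline --- good reduction of the cover outside a finite set $\mathcal{S}_0$, function-field Chebotarev over the residue field to locate a degree-one place with Frobenius in $C$, Hensel lifting, and a congruence class $t_0+p^m\mathcal{O}_K$ to get infinitely many values --- is precisely the strategy underlying the cited result, and you correctly isolate the genuine technical content (the good-reduction and Frobenius-compatibility statements \`a la Beckmann/D\`ebes--Ghazi) while still deferring it to the literature, which is no less than the paper itself does.
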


\begin{theorem}
\label{thm:spec_fctfd}
Let $K$ be a number field, $G$ be a finite group and $L/K(t)$ be a $K$-regular Galois extension with Galois group $G$. There exists a finite set $\mathcal{S}_0 = \mathcal{S}_0(L/K(t))$ of primes of $K$, depending only on $L/K(t)$, such that the following holds.

Let $t_i\in \mathbb{P}^1(\overline{K})$ be a branch point of $L/K(t)$, and denote by $I$ and $D$ the inertia group and decomposition group of $L/K(t)$ at (the prime ideal corresponding to) $t\mapsto t_i$. Let $p$ be a prime of $K$, not contained in $\mathcal{S}_0$, and such that there exists a prime $q$ extending $p$ of relative degree $1$ in $K(t_i)$. Choose $x\in D$ such that the image of $x$ under the canonical projection $D\to D/I$ equals the Frobenius of $L(t_i)_{t_i}/K(t_i)$ at $q$ (where $D/I$ is identified with the Galois group of the specialization $L(t_i)_{t_i}/K(t_i)$). 
Then there exist infinitely many $t_0\in K$ such that the specialization $L_{t_0}/K$ has inertia group conjugate to $I$ and decomposition group conjugate to $\langle I,x\rangle$ at $p$.
\end{theorem}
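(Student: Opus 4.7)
The plan is to reduce the assertion to a local question over the completion $K_p$, and then produce infinitely many global $t_0$ realizing the desired local behaviour via weak approximation.

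First I would set aside the finite exceptional set $\mathcal{S}_0$ to consist of primes of residue characteristic dividing $|G|$, primes at which two branch points of $L/K(t)$ collapse, and primes of bad reduction of a fixed integral model of a Galois generator of $L/K(t)$; away from $\mathcal{S}_0$ the ramification theory is tame and uniform. Next, since the hypothesis supplies a prime $q$ of $K(t_i)$ of residue degree $1$ over $p$, the completion $K(t_i)_q$ is canonically identified with $K_p$, and in particular $t_i$ is identified with an element $\tilde{t_i}\in K_p$. This lets me view the ramification data of $L/K(t)$ at the place $t\mapsto t_i$ as data over $K_p$.

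Now I would invoke the local description of specializations in the spirit of Beckmann--Black. Fix a uniformizer $\pi$ of $K_p$, and consider any $t_0\in K$ with $v_p(t_0-\tilde{t_i})=1$; such $t_0$ exist in abundance (and can be prescribed mod arbitrarily high powers of $p$) by weak approximation. After completing $L$ along the branch $t\mapsto t_i$, the local structure is a twist of the Puiseux extension $\overline{K_p}((t-t_i)^{1/e_i})/K_p((t-t_i))$ by the decomposition group $D$ acting via the projection $D\to D/I$ onto the Galois group of the residue extension $L(t_i)_{t_i}/K(t_i)$. Substituting $t\mapsto t_0$ and applying Krasner's lemma, the completion of $L_{t_0}$ at a prime above $p$ becomes a compositum of a totally tamely ramified extension of degree $e_i=|I|$ (accounting for the inertia $I$) with an unramified extension whose Frobenius is exactly the image of $x$ in $D/I$, by the defining property of $x$. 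Hence the decomposition group of (the Galois closure of) $L_{t_0}/K$ at $p$ is conjugate to $\langle I,x\rangle$ and the inertia is conjugate to $I$, as required.

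Finally, since the condition $v_p(t_0-\tilde{t_i})=1$ only constrains $t_0$ modulo $\pi^2$ at $p$, there are infinitely many such $t_0\in K$; to ensure these yield infinitely many genuinely distinct specializations (rather than the same extension repeatedly) one adds a Hilbertian condition at an auxiliary prime $p'\notin \mathcal{S}_0$, guaranteeing by Theorem~\ref{thm:dg} that the Galois group of the specialization stays equal to $G$ and that the extensions pulled back to different $t_0$ are non-isomorphic. The main technical obstacle is the Krasner--Puiseux step: one must carefully track how the twist by $D$ of the Puiseux tower interacts with specialization, to be sure that the image of the Frobenius at $q$ really produces the decomposition group $\langle I, x\rangle$ and not merely some subgroup with the correct inertia. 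Once that matching is verified uniformly over $t_0$ sufficiently close to $\tilde{t_i}$, the global conclusion follows directly.
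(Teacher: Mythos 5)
The paper does not actually prove this statement: it is imported wholesale from \cite{KLN19} (Theorem 4.1 and Remark 4.2(1)), so there is no in-paper argument to compare against. That said, your sketch follows essentially the same strategy as the proof in that reference: remove a finite set of bad primes (residue characteristics dividing $|G|$, primes where branch points meet or where a model of the cover has bad reduction), use the degree-one prime $q$ of $K(t_i)$ to regard $t_i$ as a point of $K_p$, choose $t_0$ meeting $t_i$ with intersection multiplicity one, and analyze the completion of $L_{t_0}$ at $p$ via a twisted Puiseux model and Krasner's lemma. The multiplicity-one condition is exactly what forces the inertia group to be all of $I$ (the ``Specialization Inertia Theorem'' of D\`ebes--Legrand, refined in \cite{KLN19}), and the unramified quotient is then governed by the reduction mod $q$ of the residue extension $L(t_i)_{t_i}/K(t_i)$, whose Frobenius is $\overline{x}$ by the choice of $x$. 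One small remark in your favour: the worry that one might land on ``some subgroup with the correct inertia'' rather than $\langle I,x\rangle$ is partly illusory, since $\langle I,x'\rangle=\langle I,x\rangle$ for any lift $x'\in xI$ of $\overline{x}$; so the decomposition group is pinned down as soon as one knows (a) the inertia is conjugate to $I$ and (b) the Frobenius of the unramified quotient is $\overline{x}$ under the stated identification.

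The genuine gap is precisely the step you flag: establishing (a) and (b) simultaneously requires identifying the completion of $L_{t_0}$ at $p$ with the specialization (at $t\mapsto t_0$) of a good integral model of $L/K(t)$ near the branch $t\mapsto t_i$, and verifying that the residue field of $L_{t_0}$ at $p$ coincides with the reduction of $L(t_i)_{t_i}$ at $q$. This Beckmann-type compatibility of specialization with reduction is the entire technical content of \cite{KLN19} and is asserted rather than carried out in your proposal. A second, minor inaccuracy: invoking Theorem~\ref{thm:dg} at an auxiliary prime $p'$ does not by itself force $\mathrm{Gal}(L_{t_0}/K)=G$ or distinctness of the resulting extensions (prescribing one unramified Frobenius class only constrains the group to contain an element of that class); the correct tool is the compatibility of Hilbert's irreducibility theorem with weak approximation, i.e.\ Proposition~\ref{prop:hilbert_wa}, which simultaneously preserves the local behaviour at $p$, keeps the full Galois group $G$, and produces infinitely many pairwise linearly disjoint specializations.
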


\begin{remark}
\label{rem:explicit_primes}
The exceptional sets $\mathcal{S}_0$ in Theorems \ref{thm:dg} and \ref{thm:spec_fctfd} are not identical, although both can be made effective (see, e.g., \cite[Theorem 2.2]{KN20}). This becomes relevant (via the proof of Theorem \ref{thm:grunwald}) when applying our main results to concrete elliptic curves; see, e.g., Example \ref{ex:psl}. We refrain from stating an effective version here in order to avoid overly technical notation.
\end{remark}

Finally, we will make use of the following result, which is a consequence of the well-known compatibility of Hilbert's irreducibility theorem with the weak approximation property (cf., e.g., \cite[Proposition 2.1]{PV}.
\begin{prop}
\label{prop:hilbert_wa}
Let $L/K(t)$ be a $K$-regular Galois extension with Galois group $G$, and let $\mathcal{S}$ be a finite set of primes of $K$. For each $p\in \mathcal{S}$ choose $t(p)\in K$, not in the set of branch points of $L/K(t)$, and let $\widehat{(L_{t(p)})_p}/\widehat{K_p}$ be the completion of the specialization $L_{t(p)}/K$ at $p$. Then there exist infinitely many values $t_0\in K$ such that
\begin{itemize}
    \item[i)] $Gal(L_{t_0}/K) = G$, and
    \item[ii)] for each $p\in \mathcal{S}$, $\widehat{(L_{t_0})_p}/\widehat{K_p} \cong \widehat{(L_{t(p)})_p}/\widehat{K_p}$.
\end{itemize}
Moreover, the extensions $L_{t_0}/K$ thus obtained may be assumed pairwise linearly disjoint.
\end{prop}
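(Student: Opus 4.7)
The plan is to combine Krasner's lemma, weak approximation, and the cited compatibility of Hilbert's irreducibility theorem with weak approximation, and then to iterate the resulting construction to achieve linear disjointness.

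First, I would translate condition (ii) into a $p$-adic open condition on $t_0$. Fix $p \in \mathcal{S}$ and let $f(T,X) \in K[T,X]$ be a monic polynomial in $X$ whose specialization $f(t(p),X)$ is the minimal polynomial over $K$ of a primitive element of $L_{t(p)}/K$; since $t(p)$ is not a branch point, $f(t(p),X)$ is separable. By Krasner's lemma applied factor-by-factor over the completion $\widehat{K_p}$, there is a $p$-adic open neighborhood $U_p \subset K_p$ of $t(p)$ with the property that for every $t_0 \in U_p$, the polynomial $f(t_0,X)$ factors over $\widehat{K_p}$ into the same number of factors of the same degrees as $f(t(p),X)$, and each local factor generates the same extension of $\widehat{K_p}$ as the corresponding factor of $f(t(p),X)$. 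Equivalently, $\widehat{(L_{t_0})_p}/\widehat{K_p} \cong \widehat{(L_{t(p)})_p}/\widehat{K_p}$ as étale $\widehat{K_p}$-algebras.

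Second, I would invoke the cited form of Hilbert's irreducibility theorem compatible with weak approximation (\cite[Proposition 2.1]{PV}): the Hilbert set $\mathcal{H} \subset K$ consisting of those $t_0$ for which $L_{t_0}/K$ has Galois group $G$ meets every non-empty product $\prod_{p \in \mathcal{S}} U_p$ of $p$-adic open sets (indexed over $\mathcal{S}$) in infinitely many points. Taking the $U_p$ as in the previous step, this yields infinitely many $t_0 \in K$ satisfying both (i) and (ii).

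Third, to achieve pairwise linear disjointness, I would argue by recursion. Suppose $t_0^{(1)}, \dots, t_0^{(m)}$ have been chosen so that the specializations $L_{t_0^{(j)}}/K$ are pairwise linearly disjoint and satisfy (i) and (ii); let $E_m$ be the compositum of their Galois closures. Base-change $L/K(t)$ to $L \cdot E_m(t)/E_m(t)$; this remains $E_m$-regular (as $K$-regularity is preserved under linearly disjoint scalar extensions inside $\overline{K}$) with Galois group $G$. Applying the same combined Hilbert–weak approximation statement over $E_m$, but with the local conditions still imposed only at primes of $K$ in $\mathcal{S}$ (via any prime of $E_m$ above each such $p$, which still defines open conditions on $t_0 \in K \subset E_m$), one obtains $t_0^{(m+1)} \in K$ satisfying (i), (ii), and $\operatorname{Gal}(L_{t_0^{(m+1)}} \cdot E_m/E_m) = G$. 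This last equality forces $L_{t_0^{(m+1)}}$ to be linearly disjoint from $E_m$ over $K$, completing the recursion.

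The main technical point is the Krasner-type step: one must verify not only that the degrees of local factors are preserved, but that the isomorphism type of the completion as a $\widehat{K_p}$-algebra is preserved; this is where separability of $f(t(p),X)$ (guaranteed by $t(p)$ avoiding the branch locus) is essential. Once (ii) is recast as a $p$-adic open condition, the cited HIT-with-weak-approximation result does the rest, and the recursion for linear disjointness is then essentially formal.
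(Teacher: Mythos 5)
Your proposal is correct and follows essentially the same route as the paper, which (in Remark \ref{rem:density}) sketches exactly this argument: Krasner's lemma converts condition ii) into membership in small $p$-adic neighborhoods of the $t(p)$, and the non-thinness of their intersection together with the compatibility of Hilbert's irreducibility theorem with weak approximation (\cite[Proposition 2.1]{PV}) yields infinitely many suitable $t_0$. Your recursive base-change argument for pairwise linear disjointness is the standard one and correctly supplies the detail the paper leaves implicit.
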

\begin{remark}
\label{rem:density}
Concretely, Proposition \ref{prop:hilbert_wa} may be shown by using Krasner's lemma, showing that in the intersection of sufficiently small open $p$-adic neighborhoods ($p\in \mathcal{S}$), the local behavior of the specializations is unchanged at all $p\in \mathcal{S}$, which is assertion ii). Since this intersection is a non-thin set (in the sense of Serre), it contains infinitely many values $t_0$ preserving the Galois group $G$.
The above also allows an asymptotic estimate of the number of extensions $L_{t_0}/K$ fulfilling the conclusion of Proposition \ref{prop:hilbert_wa}: For $B\in \mathbb{N}$, denote by $N(L/K(t), B, \{(p,t(p))\mid p\in \mathcal{S}\})$ be the number of distinct field extensions $L_{t_0}/K$ fulfilling the conclusion of Proposition \ref{prop:hilbert_wa} for the given data, and with relative discriminant $\Delta(L_{t_0}/K)$ of norm at most $B$. Then there exists a constant $a:=a(L,K)>0$, depending on the genus of $L$ (and thus, on $G$) and on $K$, but not on $\mathcal{S}$, such that for all sufficiently large $B$, one has 
$$N(L/K(t), B, \{(p,t(p))\mid p\in \mathcal{S}\}) \ge B^a.$$
Indeed, this is a direct consequence of \cite[Corollary 3.3]{BG}.
\end{remark}

\subsection{Root numbers of elliptic curves and abelian varieties}
\label{sec:rootnumber}
We recall some basics about local root numbers of abelian varieties, and in particular of elliptic curves.
We refer to \cite{BKS} for the following.

Let $A$ be an abelian variety defined over a number field $K$. The (global) \textit{root number} $W(A):=W(A/K) \in \{-1,1\}$ is the sign in the (conjectural, except for special cases) functional equation for the $L$-function of $A$. By definition $W(A)$ is the product of {\it local root numbers} $W_p(A)$, i.e., the root numbers associated to $A\otimes_K K_p$, where $p$ runs through all primes of $K$, including the archimedean ones (note that $W_p(A)=1$ for all but finitely many $p$). The parity conjecture for abelian varieties states that $W(A) = (-1)^{rank(A(K))}$, where $rank(A(K))$ is the rank of the Mordell-Weil group of $A$ over $K$. Since on the other hand, trivially $rank(A(L))\ge rank(A(K))$ for any finite extension $L\supseteq K$, the root number may give information about the rank gain of $A$ upon base change to certain extensions (once again, conditional on the parity conjecture).

The following facts about local root numbers of elliptic curves are well-known, cf., e.g., \cite{Roh} (which also includes formulas for the local root numbers in the case of additive reduction).
\begin{prop}
\label{prop:locroot}
Let $E/K$ be an elliptic curve over a number field $K$ and $p$ a place of $K$. Then
$W_p(E/K) = \begin{cases}
- 1, \text{ if } p \text{ is an archimedean place },\\
+ 1, \text{ if } E/K \text{ has good reduction at } p, \\
- 1, \text{ if } E/K \text{ has split multiplicative reduction at } p\\
+ 1, \text{ if } E/K \text{ has non-split multiplicative reduction at } p\end{cases}$.
\end{prop}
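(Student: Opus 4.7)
The plan is to identify $W_p(E/K)$ with the local $\varepsilon$-factor (at $s=1/2$) of the Weil--Deligne representation $\sigma_p(E)$ of $W_{K_p}$ attached to the rational $\ell$-adic Tate module of $E$ (for any prime $\ell \neq \mathrm{char}\,\kappa(p)$), and then to compute this factor case by case from the standard dictionary between the reduction type of $E$ at $p$ and the isomorphism class of $\sigma_p(E)$. Once this identification is in place, each of the four assertions reduces to a routine $\varepsilon$-factor calculation for a two-dimensional Weil--Deligne representation of a prescribed shape.

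For the archimedean case, the Hodge structure of $H^1(E_{\overline{K}},\mathbb{Q})$ is pure of type $\{(1,0),(0,1)\}$, and Deligne's formula for archimedean local constants then forces $W_p(E) = i^{1+1} = -1$ in both the real and the complex case. For a non-archimedean prime of good reduction, $\sigma_p(E)$ is unramified (trivial monodromy and trivial inertia action) and has determinant equal to the cyclotomic character; with the standard normalization of the additive character the $\varepsilon$-factor of such an unramified Weil--Deligne representation is $+1$, giving $W_p(E)=+1$.

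For the multiplicative cases, the theory of the Tate curve gives $\sigma_p(E) \cong \mathrm{sp}(2)\otimes\chi$, where $\mathrm{sp}(2)$ is the $2$-dimensional special (Steinberg) Weil--Deligne representation and $\chi$ is the unramified character of $W_{K_p}$ with $\chi(\varpi_p)=+1$ in the split case and $\chi(\varpi_p)=-1$ in the non-split case. The standard $\varepsilon$-factor computation (see, e.g., \cite{Roh}) yields $\varepsilon_p(\mathrm{sp}(2)\otimes\chi) = -\chi(\varpi_p)$, so $W_p(E) = -1$ in the split and $W_p(E) = +1$ in the non-split multiplicative case, exactly as stated.

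The only genuinely subtle point in the execution is keeping track of the normalizations --- the choice of additive character $\psi$ used in the definition of $\varepsilon_p$ and the shift from $s=0$ to $s=1/2$ --- since a wrong choice would flip every sign simultaneously. With the conventions of \cite{Roh}, this bookkeeping is routine and no further difficulty arises, so I expect the writeup to be essentially a statement of the Weil--Deligne dictionary followed by a short reference to the four corresponding $\varepsilon$-factor evaluations.
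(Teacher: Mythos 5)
Your proposal is correct: the paper offers no proof of Proposition \ref{prop:locroot}, simply recording these local root numbers as well-known facts with a reference to \cite{Roh}, and your sketch (root number as the $\varepsilon$-factor of the Weil--Deligne representation of the Tate module, with the archimedean, unramified, and $\mathrm{sp}(2)\otimes\chi$ cases computed separately) is precisely the standard argument carried out in that reference. The normalization caveat you flag is indeed the only delicate point, and it is handled by Rohrlich's conventions exactly as you indicate.
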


In particular, in the case where $E$ is a semistable elliptic curve over $K$, one has the following characterization of the 
root number of $E$ over $K$, cf.\ e.g., \cite{Dokch}:
\begin{kor}
\label{prop:dok}
For a semistable elliptic curve over a number field $K$,
$W(E/K) = (-1)^{u+s}$, where $u$ is the number of archimedean places of $K$ and $s$ is the number of primes of $K$ at which $E$ has split multiplicative reduction.
\end{kor}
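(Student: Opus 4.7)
The plan is to derive the formula by a direct computation, using the fact that the global root number factors as a product of local root numbers together with the explicit evaluation of each local factor provided by Proposition \ref{prop:locroot}. The semistability hypothesis is exactly what makes this evaluation complete: it guarantees that at every finite prime of $K$ the reduction type of $E$ is either good or multiplicative, so Proposition \ref{prop:locroot} covers all local contributions and no case involving additive reduction has to be treated.

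Concretely, I would start by writing
\[
W(E/K) \;=\; \prod_{p} W_p(E/K),
\]
where the product runs over all places of $K$ (archimedean and non-archimedean), and recalling that all but finitely many factors are equal to $+1$, so the product makes sense. I then partition the set of places into four disjoint classes: the $u$ archimedean places, the finite primes of good reduction, the $s$ finite primes of split multiplicative reduction, and the finite primes of non-split multiplicative reduction. By Proposition \ref{prop:locroot}, each archimedean place contributes a factor $-1$, each good-reduction prime contributes $+1$, each split multiplicative prime contributes $-1$, and each non-split multiplicative prime contributes $+1$. Multiplying these contributions yields
\[
W(E/K) \;=\; (-1)^{u}\cdot 1 \cdot (-1)^{s}\cdot 1 \;=\; (-1)^{u+s},
\]
which is the claimed identity.

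There is essentially no obstacle here; the only ``subtlety'' to keep in mind is simply that the semistability assumption is what permits the clean use of Proposition \ref{prop:locroot}, since otherwise one would have to include additional local root number contributions at primes of additive reduction (as treated in \cite{Roh}).
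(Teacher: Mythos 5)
Your derivation is correct and is exactly the intended one: the paper states this as an immediate corollary of Proposition \ref{prop:locroot}, obtained by factoring $W(E/K)$ into local root numbers and noting that semistability excludes additive reduction, so every local factor is covered by that proposition. Nothing to add.
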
 

\section{Proof of the main results}
\label{sec:pf_main}
\subsection{Some preparations}

Fix a transitive group $G\le S_n$, a $G$-extension $F/K$ of number fields, an elliptic curve $E$ defined over $K$ and a prime $p$ of $K$ at which $E$ has multiplicative reduction.
Due to Proposition \ref{prop:locroot}, the product of local root numbers $W_q(E/F)$ at primes $q$ of $F$ extending $p$ is exactly $(-1)^{s_p}$, where $s_p$ is the number of primes of $F$ extending $p$ at which $E$ has split multiplicative reduction. It is well-known (e.g., \cite[Chapter VII.5]{Silverman}) that $E$ has multiplicative reduction at all primes $\mathfrak{p}$ extending  $p$, and the reduction type at $\mathfrak{p}$ is non-split if and only if it is non-split at $p$ and the residue degree of $\mathfrak{p}$ over $p$ is odd.

On the other hand, the places extending $p$ in $F$ correspond one-to-one to the orbits of the decomposition group $D_p\le G$ at $p$ in the Galois closure $\Omega/K$ of $F/K$, and the residue degree of any such place equals the number of $I_p$-orbits ($I_p$ being the inertia group at $p$) into which the respective $D_p$-orbit decomposes.
We therefore have the following characterization:

\begin{lemma}
\label{lem:splitplaces}
Assume the above notation.
\begin{itemize}
\item[a)] If $E$ has split multiplicative reduction at $p$, then $s_p$ equals the number of orbits of the decomposition group $D_p$ at  $p$ in $\Omega/K$.
\item[b)] If $E$ has non-split multiplicative reduction at $p$, then $s_p$ equals the number of orbits of $D_p$ which split into an even number of orbits of the inertia group $I_p \trianglelefteq D_p$.
\end{itemize}
\end{lemma}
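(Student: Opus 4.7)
The proof will be essentially a bookkeeping exercise, combining two inputs: (i) the behavior of multiplicative reduction under local extensions, and (ii) the orbit description of primes above $p$ in $F$ already recalled in the paragraph preceding the lemma.

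First, I would recall the standard local classification: if $E/K_p$ has split multiplicative reduction, then at every prime $\mathfrak{p}$ of $F$ above $p$, the completion $F_\mathfrak{p}/K_p$ preserves split multiplicative reduction (the Tate period lies in $K_p^\times$, so certainly in $F_\mathfrak{p}^\times$). If $E/K_p$ has non-split multiplicative reduction, then $E/F_\mathfrak{p}$ is split if and only if the unramified quadratic character of $K_p$ becomes trivial on $F_\mathfrak{p}^\times$, which happens exactly when the residue field extension $k(\mathfrak{p})/k(p)$ has even degree. This is precisely the statement recalled (with attribution to Silverman) just before the lemma.

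Second, I would invoke the orbit description already set up in the paragraph preceding the lemma: the primes $\mathfrak{p}$ of $F$ lying above $p$ are in bijection with the orbits of the decomposition group $D_p \le G$ on the natural set $\{1,\dots,n\}$ of $G$, and for each such orbit $O$, the residue degree $f(\mathfrak{p}/p)$ equals the number of $I_p$-orbits into which $O$ decomposes under the restriction of the action to $I_p \trianglelefteq D_p$.

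Combining these two ingredients finishes the proof in a single line for each case. In case (a), every prime above $p$ has split multiplicative reduction, so $s_p$ equals the total number of primes of $F$ above $p$, which is the number of $D_p$-orbits on $\{1,\dots,n\}$. In case (b), a prime $\mathfrak{p}$ above $p$ contributes to $s_p$ precisely when $f(\mathfrak{p}/p)$ is even, which by the orbit description occurs precisely when the corresponding $D_p$-orbit splits into an even number of $I_p$-orbits. There is no serious obstacle here; the only point to check carefully is that the local split/non-split dichotomy under residue field extensions is governed solely by the parity of the residue degree, which is the content of the local Tate uniformization cited above.
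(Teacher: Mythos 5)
Your proposal is correct and follows exactly the route the paper takes: the lemma is stated there as an immediate consequence of the two facts recalled in the preceding paragraph, namely the Silverman/Tate-uniformization criterion that non-split multiplicative reduction becomes split precisely over residue extensions of even degree, and the identification of primes above $p$ with $D_p$-orbits whose residue degrees count the constituent $I_p$-orbits. Nothing is missing.
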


Similarly, if $q$ is an archimedean place of $K$ extended by exactly $u_q$ places of $F$, the contribution to $W(E/F)$ from places extending $q$ is simply $(-1)^{u_q}$, and this number is determined as follows.

\begin{lemma}
\label{lem:infplaces}
Assume the above notation. Let $q$ be an archimedean place of $K$.
\begin{itemize}
    \item[a)] If $q$ is real archimedean, then $u_q$ equals the number of orbits of $\tau \in G$, where $K\hookrightarrow \mathbb{R}$ is the real embedding corresponding to $q$, and $\tau$ is the complex conjugation in $\Omega/K$.
    \item[b)] If $q$ is non-real, then $u_q=[F:K]$.
\end{itemize}
\end{lemma}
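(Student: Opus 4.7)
The plan is to reduce the lemma to the standard orbit-counting description of places lying above a given place in a Galois extension, applied now at archimedean places. Concretely, for any place $q$ of $K$, the places of the Galois closure $\Omega$ above $q$ form a single $G$-orbit; fixing such a place $Q$ and letting $D_q\le G$ denote its stabilizer (i.e., the decomposition group of $\Omega/K$ at $Q$), the places of $F$ above $q$ are in bijection with the orbits of $D_q$ on the set $\{1,\dots,n\}$ on which $G$ acts (equivalently, on $G/G_1$). This is the same principle already invoked at finite primes in the discussion preceding Lemma~\ref{lem:splitplaces}, and it applies verbatim to archimedean places once $D_q$ is interpreted as an archimedean decomposition group.

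For part (a), a real place $q$ corresponds to a fixed real embedding $K\hookrightarrow\mathbb{R}$. Extending this embedding to the Galois closure $\Omega$ realizes a place $Q$ of $\Omega$ above $q$, whose decomposition group inside $G$ is generated by the complex conjugation $\tau$ attached to this embedding (with $\tau=1$ if $Q$ is itself real and $\tau$ of order $2$ otherwise). Hence $D_q=\langle\tau\rangle$, and the general principle above yields $u_q$ equal to the number of orbits of $\tau$ acting on $\{1,\dots,n\}$, which is precisely the content of (a). Note that an orbit of length $1$ contributes a real place of $F$ above $q$, while an orbit of length $2$ contributes a complex place; in either case it contributes exactly one place, consistent with the count.

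For part (b), a non-real place $q$ of $K$ has completion $K_q\cong\mathbb{C}$, which is algebraically closed. Consequently every embedding of $F$ into $\overline{K_q}$ extending $K\hookrightarrow K_q$ factors through $K_q$ itself; equivalently, $F\otimes_K K_q$ splits as a product of $[F:K]$ copies of $\mathbb{C}$, one for each embedding, and the archimedean decomposition group $D_q$ is trivial. The $D_q$-orbits on $\{1,\dots,n\}$ are therefore all singletons, giving $u_q=[F:K]$ as claimed. No real obstacle arises: the statement is a direct translation of a standard Galois-theoretic fact, the only input being the identification of the archimedean decomposition group with the cyclic group generated by complex conjugation in the real case and with the trivial group in the complex case.
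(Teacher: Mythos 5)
Your proof is correct and follows exactly the route the paper implicitly takes: the lemma is stated there without proof as an instance of the standard correspondence between places of $F$ above $q$ and orbits of the decomposition group on $\{1,\dots,n\}$, with the archimedean decomposition group identified as $\langle\tau\rangle$ in the real case and trivial in the complex case. Your write-up simply makes this explicit, and all steps are sound.
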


In particular, the values $s_q$ in Lemma \ref{lem:splitplaces} as well as $u_q$ in Lemma \ref{lem:infplaces}a) are given group-theoretically in terms of certain orbit numbers.
Therefore, due to Proposition \ref{prop:locroot}, controlling the value of $W(E/F)$ reduces to a two-fold problem: 
firstly, a purely group-theoretical question about the existence of subgroups of $G$ with a certain orbit structure. Secondly, a local-global problem, namely finding $G$-extensions with the prescribed subgroups from the first step as decomposition (and inertia) subgroups at certain prescribed primes.

Indeed, assume that there is a prime $p$ of $K$ which is either real archimedean, or non-archimedean and of split multiplicative reduction for $E$. Assume furthermore that there exist two subgroups $H_1$ and $H_2$ of $G$ whose numbers of orbits $|orb(H_1)|$ and $|orb(H_2)|$ have different parity, and such that $H_1$ and $H_2$ both occur as decomposition groups of a suitable extension of $K_p$. 
Then $W(E/F)$ can in principle be controlled via the prime $p$; concretely, assume for the moment that $H_1$ is cyclic and contained in $A_n$ (i.e., in particular $n-|orb(H_1)|$ is even), whereas $n-|orb(H_2)|$ is odd. If there exist two $G$-extensions $F_1/K$ and $F_2/K$ with decomposition groups $H_1$ and $H_2$ at $p$, then $W_p(E/F_1)\ne W_p(E/F_2)$ due to Lemmas~\ref{lem:splitplaces}a) and \ref{lem:infplaces}a).
If, additionally, $F_1/K$ and $F_2/K$ have identical local behavior at all other primes of multiplicative reduction of $E$ as well as at the archimedean primes, then we have $W(E/F_1) \ne W(E/F_2)$.

Assume now instead that there exists a prime $p$ at which $E$ has non-split multiplicative reduction. 
Then, from Lemma~\ref{lem:splitplaces}b), $W_p(E/K)$ can be controlled as soon as there exists pairs of subgroups $(I_1,H_1)$ and $(I_2, H_2)$ occurring as inertia group and decomposition group at $p$ in suitable $G$-extensions $F_1/K$ and $F_2/K$, and such that the parities of $|orb(H_1)| - |orb(I_1)|$ and $|orb(H_2)| - |orb(I_2)|$ are different. 
If, additionally, $I_2$ and $H_1$ are both cyclic and contained in $A_n$ (i.e., $n-|orb(I_2)|$, $n-|orb(H_1)|$ and $n-|orb(I_1)|$ are all even), 
then we obtain exactly the same sufficient condition ``$n-|orb(H_2)|$ odd" for the inequality of local root numbers $W_p(E/F_1) \ne W_p(E/F_2)$ as above. 

In total, we have shown the following simplified assertion:

\begin{prop}
\label{prop:basic}
Let $E$ be an elliptic curve over a number field $K$ 
and $\mathcal{S}_E$ be any finite set of primes of $K$, containing the archimedean places of $K$ and the primes of bad reduction of $E$. 
%
%
Let $G\le S_n$ be a transitive permutation group.

Assume the following:
\begin{itemize}
\item[i)] There exist $p\in \mathcal{S}_E$, either real archimedean or of multiplicative reduction for $E$, and a subgroup $H\le G$ which occurs as the Galois group of some extension of $\widehat{K_p}$ such that $n-|orb(H)|$ is odd. 
\item[ii)] There exist $G$-extensions $F_1/K$ and $F_2/K$, with isomorphic completions $\widehat{(F_1)_q} \cong \widehat{(F_2)_q}$ for all $q\in \mathcal{S}_E\setminus\{p\}$, and such that $p$ is unramified in $F_1/K$ with decomposition group contained in $G\cap A_n$, whereas the decomposition group at $p$ of $F_2/K$ equals $H$, and the corresponding inertia group $I\trianglelefteq H$ is cyclic and contained in $G\cap A_n$.
\end{itemize}
Then $W(E/F_1)\ne W(E/ F_2)$. In particular, conditional on the parity conjecture, $E$ gains rank over at least one of $F_1$ and $F_2$.
%
\end{prop}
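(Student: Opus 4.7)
The plan is to expand $W(E/F_i) = \prod_q W_q(E/F_i)$ and pare it down to the sole contribution at $p$. Primes $q \notin \mathcal{S}_E$ are primes of good reduction for $E$, so by Proposition~\ref{prop:locroot} all local root numbers above such $q$ equal $+1$. For $q \in \mathcal{S}_E \setminus \{p\}$, the hypothesis $\widehat{(F_1)_q} \cong \widehat{(F_2)_q}$ means that the \'etale $\widehat{K_q}$-algebras formed by the completions of $F_1$ and $F_2$ at places above $q$ agree, so the corresponding partial products coincide. Hence $W(E/F_1)/W(E/F_2)$ reduces to the ratio of the ``$p$-parts'', namely the products of local root numbers at all places above $p$.

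Next I would evaluate this $p$-part case by case using Lemmas~\ref{lem:splitplaces} and~\ref{lem:infplaces}. In the real archimedean and split multiplicative cases the contribution has the shape $(-1)^{|orb(D)|}$, where $D$ is the decomposition group of $F_i/K$ at $p$. In the non-split multiplicative case it is $(-1)^{s_p}$ with $s_p$ the number of $D$-orbits that split into an even number of $I$-orbits; the key combinatorial observation is that $s_p$ has the same parity as $|orb(D)| - |orb(I)|$, which one sees by partitioning each $D$-orbit into $k_i$ many $I$-orbits and observing $\sum_i (1 - k_i) \equiv \#\{i : k_i \text{ even}\} \pmod{2}$.

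Feeding in the hypotheses of (ii) then closes the argument. For $F_1$, the decomposition group sits inside $A_n$, so $|orb(D)| \equiv n \pmod 2$, and in the non-split case the inertia is trivial so $|orb(I)| = n$; thus the $p$-part of $W(E/F_1)$ is $(-1)^n$ (split/archimedean) or $+1$ (non-split). For $F_2$, the cyclic inertia $I \subseteq A_n$ gives $|orb(I)| \equiv n \pmod 2$, while the assumption that $n - |orb(H)|$ is odd makes $|orb(H)| \not\equiv n \pmod 2$, so the corresponding $p$-part is $-(-1)^n$ (split/archimedean) or $-1$ (non-split). In every case the two $p$-parts differ by a sign, so $W(E/F_1) \ne W(E/F_2)$. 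Consequently at least one of them disagrees with $W(E/K)$, and for that $F_i$ the parity conjecture forces $rank(E(F_i)) \not\equiv rank(E(K)) \pmod 2$, which combined with the trivial bound $rank(E(F_i)) \ge rank(E(K))$ gives strict rank gain. I expect the only step that is not pure bookkeeping to be the combinatorial parity identity for the non-split case; the rest is a careful matching of hypotheses with the orbit formulas.
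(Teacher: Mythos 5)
Your proposal is correct and follows essentially the same approach as the paper: factor the global root number into local contributions, observe that the contributions at $q\in\mathcal{S}_E\setminus\{p\}$ cancel because the completions agree, and then compare the $p$-parts via the orbit-count formulas of Lemmas~\ref{lem:splitplaces} and~\ref{lem:infplaces}. The one step the paper leaves implicit and you spell out explicitly --- that in the non-split case $s_p \equiv |orb(D)| - |orb(I)| \pmod 2$ --- is exactly the small combinatorial observation that makes the bookkeeping close, and your argument for it is correct.
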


\begin{remark}
Some words on the group-theoretical condition on the existence of a subgroup $H$ as in condition i) are in order. Obviously, it can be fulfilled for any subgroup $G\le S_n$ not contained in $A_n$. On the other hand, it cannot possibly be fulfilled for groups $G$ of odd order. An ``obvious" class of interesting test cases (i.e., for which the group theoretical condition is neither trivial nor impossible) is then the class of non-abelian simple groups, and we will follow this consideration in Sections \ref{sec:psl} and \ref{sec:an}. 
The group-theoretical condition still does not hold for all simple groups - e.g., computation with Magma \cite{Magma} confirms that $PSL_2(11)$ acting primitively on $55$ points has no metacyclic subgroup (i.e., no subgroup which could occur as the decomposition group at a tamely ramified prime) with $55-|orb(H)|$ odd. At least, it seems very frequently fulfilled among primitive simple groups.
\end{remark}

\subsection{Proof of Theorem \ref{thm:grunwald}}
%
%
Deducing positive answers to Question~\ref{ques:weak}a) (and, in fact, b)) for a group $G$ from Proposition \ref{prop:basic} is possible as soon as $G$ contains $H$ as in Proposition~\ref{prop:basic}, occurring as a Galois group over suitable completions $\widehat{K_p}$, and assuming the existence of $G$-extensions with suitable local behavior as in Condition ii) of Prop.\ \ref{prop:basic}. Tools to guarantee the latter are provided by Theorems \ref{thm:dg} and \ref{thm:spec_fctfd}, which we now use to prove Theorem \ref{thm:grunwald}. 

\begin{proof}[Proof of Theorem \ref{thm:grunwald}]
In case $G$ is not contained in $A_n$, define $I=\{1\}$ and $H$ to be any cyclic subgroup of $G$ not contained in $A_n$. If on the other hand $G\le A_n$, we are in Alternative ii) of the assumptions of Theorem \ref{thm:grunwald}, and set $H:=\langle I,x\rangle$ as defined there. Note that in both cases, $n-|orb(H)|$ is odd, and $I$ is cyclic and contained in $A_n$.

We now claim that for all primes $p$ of $K$ inside some (to be determined) set $\mathcal{S}$ of primes of positive density, and for all finite sets $\mathcal{S}_0$ of primes of $K$ different from $p$, there exist two specializations $L_{t_0(p)}/K$ and $L_{t_1(p)}/K$, both with Galois group $G$, such that 1) $L_{t_0(p)}/K$ has decomposition group $H$ and inertia group $I$ at $p$; 2)  $L_{t_1(p)}/K$ is unramified at $p$ with Frobenius contained in $G\cap A_n$; 
and 3) both extensions have identical local behaviour at each $q \in \mathcal{S}_0$.

Since the proof of the claim requires slightly different arguments in the two alternatives of Theorem \ref{thm:grunwald}, we assume first $G\le A_n$ (whence we are in Alternative ii)).
Then the first requirement of the claim is guaranteed by Theorem~\ref{thm:spec_fctfd}, assuming that we  choose $p$ satisfying both of the following:
\begin{itemize}
\item[a)] $p$ has Frobenius class (the conjugacy class of) $\overline{x}$ in the residue extension of $L/K(t)$ at the branch point $t\mapsto t_i$, where $\overline{x}$ denotes the image of the prescribed element $x\in D$ from Theorem~\ref{thm:grunwald} modulo $I$. 
\item[b)] $p$ is outside some finite set of primes depending on $L$.
\end{itemize}
Clearly the set $\tilde{\mathcal{S}}$ of such primes $p$ is of positive density by Chebotarev's density theorem, yielding the claim about $L_{t_0(p)}/K$.

Next, the second requirement of the claim simply translates to $p$ being unramified in $L_{t_1(p)}/K$, and hence can obviously be fulfilled for all but finitely many primes $p$ by choosing an arbitrary specialization value $t_1(p)\in K$. Define $\mathcal{S}$ as $\tilde{\mathcal{S}}$ minus these finitely many primes.

Now we need to find infinitely many values $t_0$ (resp., $t_1$) $\in K$ whose specialization $L_{t_0}/K$ (resp., $L_{t_1}/K$) has the same local behavior as $L_{t_0(p)}/K$ (resp., as $L_{t_1(p)}/K$) at $p$, retains the full Galois group $G$, and moreover such that $L_{t_0}/K$ and $L_{t_1}/K$ have identical local behavior at all $q\in \mathcal{S}_0$. 
This, however, is provided by Proposition~\ref{prop:hilbert_wa}, which also gives linear disjointness.
The assertion now follows immediately from Proposition \ref{prop:basic}. Concretely, choosing $F_1$ (resp., $F_2$) as the subfield of $L_{t_0}$ (resp., $L_{t_1}$) fixed by a point stabilizer in $G\le S_n$, one has $W(E/F_1)\ne W(E/F_2)$ for all elliptic curves $E$ over $K$ with at least one prime $p\in \mathcal{S}$ of multiplicative reduction (of course, we here apply the above with $\mathcal{S}_0$ the union of the set of archimedean primes of $K$ with the primes of bad reduction $q\ne p$ of $E$).

Now assume instead that we are in Alternative i) of the theorem, i.e., $G$ is not contained in $A_n$. 
Again, the assertion follows directly from Proposition \ref{prop:basic}, assuming that we can prove the claim. The only changes in the proof are required in the choice of specializations $L_{t_1(p)}/K$ and $L_{t_2(p)}/K$ fulfilling the first and second condition of the claim, respectively. More precisely, it now suffices to find specializations unramified at $p$ and with a prescribed conjugacy class as Frobenius at $p$ (namely, a class of odd and even permutations for conditions 1) and 2), respectively). This is possible due to Theorem \ref{thm:dg}, thus completing the proof.
\end{proof}

Note that in the case when $G$ is not contained in $A_n$, the above proof actually yields a cofinite set $\mathcal{S}$ of primes fulfilling the assertion of Theorem \ref{thm:grunwald}. 
We end this section by pointing out a class of groups for which one has an even stronger conclusion.

\begin{theorem}
\label{thm:generic}
Assume the parity conjecture.\\
Let $K$ be a number field, and let $G\le S_n$ be a transitive group not contained in $A_n$ and possessing a generic Galois extension over $K$. Let $E$ be an elliptic curve over $K$, and assume that 
$E$ possesses at least one prime of multiplicative reduction. 
Then $E$ gains rank over infinitely many $G$-extensions of $K$.
\end{theorem}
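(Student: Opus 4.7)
The plan is to reduce the theorem to Proposition~\ref{prop:basic} applied to an arbitrary prime $p$ of multiplicative reduction of $E$. Since $G \not\le A_n$, I pick an odd permutation $\sigma \in G \setminus A_n$ and set $H := \langle \sigma \rangle$, a cyclic subgroup; then $n - |orb(H)|$ is odd, and $H$ occurs as the Galois group of the unique unramified extension of $\widehat{K_p}$ of degree $|H|$, so condition i) of Proposition~\ref{prop:basic} is met. Let $\mathcal{S}_E$ be the finite set consisting of the archimedean primes of $K$, the primes of bad reduction of $E$, and $p$ itself.

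The decisive input is the generic extension hypothesis. A classical consequence (essentially due to Saltman) of the existence of a generic $G$-Galois extension over $K$ is the \emph{Grunwald property} for $G$ over $K$: for any finite set of primes $\mathcal{S}$ of $K$ and any choice of local $G$-Galois algebras $L_q/\widehat{K_q}$ ($q \in \mathcal{S}$), there exists a single global $G$-extension $F/K$ whose completion at each $q \in \mathcal{S}$ is isomorphic to $L_q$. I would apply this twice with $\mathcal{S} = \mathcal{S}_E$. First, construct $F_2/K$ by prescribing: at $p$, the unramified $H$-extension with Frobenius $\sigma$; at each $q \in \mathcal{S}_E \setminus \{p\}$, some arbitrary $G$-realization over $\widehat{K_q}$. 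Second, construct $F_1/K$ with identical completions at $\mathcal{S}_E \setminus \{p\}$, but unramified with trivial Frobenius at $p$. By design $F_1$ and $F_2$ fulfill all hypotheses of Proposition~\ref{prop:basic} (with trivial, hence cyclic and $A_n$-contained, inertia at $p$), so $W(E/F_1) \ne W(E/F_2)$, and under the parity conjecture $E$ gains rank over at least one of them.

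To produce \emph{infinitely many} pairwise linearly disjoint such extensions, I would exploit the parameter-dependence of the generic extension. The prescribed local behavior at each $q \in \mathcal{S}_E$ corresponds to an open condition in the $q$-adic topology on the parameter space, so combining Hilbert's irreducibility theorem with weak approximation in the spirit of Proposition~\ref{prop:hilbert_wa} yields infinitely many pairwise linearly disjoint $G$-specializations matching each of the two prescribed ``local profiles''. Pairing them off and applying the pigeonhole principle produces an infinite family of pairwise linearly disjoint $G$-extensions over which $E$ gains rank.

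The main obstacle is verifying the Grunwald step cleanly: one must check that the existence of a generic extension gives genuinely unrestricted freedom to prescribe local behavior at finitely many primes, \emph{including} the finitely many ``bad'' primes excluded from the conclusions of Theorems~\ref{thm:dg} and~\ref{thm:spec_fctfd}. This unrestricted local control is precisely what allows Theorem~\ref{thm:generic} to handle every elliptic curve with some prime of multiplicative reduction, dispensing with the density-type restriction on $p$ present in Theorem~\ref{thm:grunwald}.
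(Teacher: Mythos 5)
Your proposal is correct and follows essentially the same route as the paper: choose an odd permutation $\sigma\in G\setminus A_n$, use Saltman's theorem that a generic Galois extension yields unrestricted solvability of Grunwald problems to build $F_1,F_2$ with matching completions away from $p$ and differing (unramified) Frobenius at $p$, then conclude via Proposition~\ref{prop:basic}. The only cosmetic difference is in producing infinitely many such pairs (you use Hilbert plus weak approximation on the generic parameter space, the paper enlarges the set of local conditions), and your flagged ``obstacle'' is exactly what Saltman's result resolves.
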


\begin{proof}
We may choose a prime $p$ of $K$, 
 of multiplicative reduction for $E$, and an element $\sigma\in G$ with $\textrm{sgn}(\sigma)=-1$. 
 Since $G$ is assumed to possess a generic Galois extension over $K$, we may invoke a famous result by Saltman \cite[Theorem 5.9]{Saltman}, asserting the solvability of all ``Grunwald problems" for $G$ over $K$. I.e., given  any finite set $S$ of places of $K$ and Galois extensions $\widehat{L^q}/\widehat{K_q}$ with Galois group embedding into $G$ (for all $q\in S$), there exists a $G$-extension of $K$ with completion $\widehat{L^q}/\widehat{K_q}$ for all $q\in S$ simultaneously. 
 It now suffices to choose all the $\widehat{L^q}/\widehat{K_q}$ unramified and simply apply Proposition \ref{prop:basic} with $H=\langle\sigma\rangle$ and $I=\{1\}$, yielding $G$-extensions $F_1/K$ and $F_2/K$ with $W(E/F_1)\ne W(E/F_2)$. 
 Of course, one even obtains infinitely many pairs of such extensions, e.g., up to increasing the finite set $S$ and thus the number of local conditions above suitably. This completes the proof. 
\end{proof}

\begin{remark}
\begin{itemize}
\item[a)]
Groups for which generic Galois extensions exist over all number fields $K$ include the  symmetric groups $S_n$, whence in particular we immediately regain a positive answer to Question \ref{ques:strong} over (e.g.) $K=\mathbb{Q}$, although of course this follows unconditionally from \cite{LOT}. Another class of examples are dihedral groups $D_p$ for all primes $p$, cf.\ \cite[Proposition 5.5.2]{JLY} (and these groups are not contained in $A_p$ as soon as $p\equiv 3$ mod $4$).
\item[b)] If $G$ is even a semidirect product of $G\cap A_n$ and $C_2$ (such as the two examples in a)), then even the condition on $E$ to have a prime of multiplicative reduction in Theorem \ref{thm:generic} can be dropped, as long as $K$ has a real embedding. Indeed, one may then choose a real-archimedean prime $p$ and a cyclic subgroup $H$ generated by an involution in $G\setminus A_n$ (taking the role of complex conjugation in an extension of $\widehat{K_p}$), and carry out the proof in analogy to the above. 
\end{itemize}
\end{remark}

In light of the above, it is more challenging to focus on groups $G\le A_n$, and for which generic Galois extensions are not known to exist (for example, the existence of generic $A_n$-extensions over $\mathbb{Q}$ is open for all $n\ge 6$, see (8.5.12) in \cite{JLY}).

\subsection{Application of Theorem \ref{thm:grunwald}: A first example}
\label{sec:firstex}

We show the practical applicability of Theorem \ref{thm:grunwald} with a first non-trivial example.  In the next section we will give an infinite family of examples.
 It should be understood that applications of Theorem \ref{thm:grunwald} are not limited in any way to these cases, and the interested reader may consult, e.g., \cite{MM} for many more $\mathbb{Q}$-regular Galois realizations of simple groups $G$, some of which (with suitable permutation representations of $G$) lend themselves to similar applications. Group-theoretical claims in the following example were verified with Magma.

\begin{example}
Let $G=M_{24} < A_{24}$ be the largest sporadic Mathieu group in its natural primitive action on $24$ points. We show the assumptions of Theorem \ref{thm:grunwald} can be fulfilled for $G$ over $K=\mathbb{Q}$. By \cite[Theorem III.7.12]{MM}, there exists a $\mathbb{Q}$-regular Galois realization $L/\mathbb{Q}(t)$ with group $G$ and with four branch points, of ramification index $2$, $2$, $2$ and $12$ respectively. The branch point $t_1$ of ramification index $12$ is automatically $\mathbb{Q}$-rational. Indeed, this follows from a special case of the so-called branch cycle lemma (e.g., \cite[Chapter I, Theorem 2.6]{MM}), asserting that branch points conjugate to each other must have the same ramification index. Furthermore, the inertia group $I$ of order $12$ has exactly two orbits (each of length $12$) in the action on $24$ points. The residue extension of $L/\mathbb{Q}(t)$ at $t\mapsto t_i$ must contain the $12$-th roots of unity (see, e.g., \cite[Lemma 2.3]{KLN19}), whence the decomposition group at $t\mapsto t_i$ is of order divisible by $12\cdot \varphi(12) = 48$. On the other hand, the normalizer of the cyclic subgroup $I$ in $M_{24}$ is only of order $48$, whence equality must hold. This normalizer contains a subgroup $H=\langle I, x\rangle$ isomorphic to the dihedral group $D_{12}$, such that $x$ switches the two orbits of $I$; in other words, $24-|orb(H)| = 23$ is odd. This shows that the assumptions of Theorem \ref{thm:grunwald} are fulfilled.
\end{example}

Note that the above argument used only ramification indices of a certain $\mathbb{Q}$-regular realization; also, the proof of the existence of this particular realization can be carried out purely group-theoretically (using braid group action). The argument therefore does not require the computation of a concrete polynomial for the extension $L/\mathbb{Q}(t)$.

\subsection{Projective linear groups: Proof of Theorem \ref{thm:psl}}
\label{sec:psl}

\begin{proof}

We can make Theorem \ref{thm:grunwald} more explicit using a concrete $PSL_2(p)$-extension $L/\mathbb{Q}(t)$ as given in \cite[Chapter 1, Cor.\ 8.10]{MM}. This extension has three branch points, 
of ramification index 
 $2$, $p$ and $p$ respectively. Since there is only one branch point $t_1$ of ramification index $2$, it is automatically $\mathbb{Q}$-rational by the branch cycle lemma \cite[Chapter I, Theorem 2.6]{MM}. 
 Furthermore, as shown in the proof of \cite[Theorem 5.3]{KLN19}, the residue extension $L_{t_1}/\mathbb{Q}$ at this branch point must contain $\sqrt{-1}$. The same then remains true for the $K$-regular $PSL_2(p)$-extension $L':=L\cdot K/K(t)$. In particular, for any prime $q$ of $K$ which is inert in $K(\sqrt{-1})$, the Frobenius of $L'_{t_1}/K$ at $q$ has even order. Therefore, we can use Theorem \ref{thm:spec_fctfd} to obtain the following:\\
For all but finitely many primes $q$ of $K$ inert in $K(\sqrt{-1})$, there exist infinitely many $t_0\in K$ such that $L'_{t_0}/K$ has inertia group $I$ of order $2$ and decomposition group $\langle I,x\rangle$ of order divisible by $4$ at $q$. However, due to our condition $p\equiv 3 \pmod 8$, the only class of elements of order $2$ in $PSL_2(p)$ is the class of fixed point free involutions, which are contained in a cyclic subgroup of order $\frac{p+1}{2}$. The centralizer of such an involution is the normalizer of such a cyclic subgroup, and is isomorphic to a dihedral group of order $p+1$ (acting regularly on $p+1$ points). Cf., e.g., \cite[Chapter II.8]{Huppert} (in particular Satz 8.4 and Satz 8.5), for the above well-known facts on subgroups of $PSL_2(p)$.
Now the group $H:=\langle I,x\rangle$ is a subgroup of such an involution  centralizer, and thus has $\frac{p+1}{|H|}$ orbits (each of length $|H|$). Since $p+1 \equiv 4 \pmod 8$ by assumption and $|H|$ is divisible by $4$ by the above, this means that the number of orbits of $H$ is odd. On the other hand, we are considering $PSL_2(p)\le S_{p+1}$ acting transitively on a set of even cardinality. Now, as in the proof of Theorem \ref{thm:grunwald} we obtain that for all but finitely many $q$ as above, and for all elliptic curves $E$ with multiplicative reduction at $p$, there exist infinitely many $PSL_2(p)$-extensions of $K$ over which $E$ gains rank. This concludes the proof.
\end{proof}

\begin{example}
\label{ex:psl}
Concrete polynomials for the $PSL_2(p)$-realizations used in the above proof are available in small cases. Notably, for $p=11$, \cite[Theorem 3]{Malle} provides the polynomial 
$$f(t,X) = 
(X^3-66X-308)^4-9s(t)\cdot (11X^5-44X^4-1573X^3+1892X^2+57358X+103763)$$$$-3s(t)^2(X-11)\in \mathbb{Q}(t)[X] \ 
\text{ (where } s(t):=2^8\cdot 3^5/(11t^2+1)),$$ with three branch points of ramification index $2$, $11$ and $11$. Indeed, one verifies that $t=\infty$ is the branch point of ramification index $2$. Let $E$ be an elliptic curve over $\mathbb{Q}$ having a ``sufficiently large" prime $q\equiv 3$ mod $4$ of multiplicative reduction. Following the proof of Theorem \ref{thm:grunwald}, to obtain different parities of the rank over (degree-$12$) $PSL_2(11)$-extensions $K_1/\mathbb{Q}$ and $K_2/\mathbb{Q}$, it suffices to choose specialization values $t_1,t_2\in\mathbb{Q}$ such that $t_1$ and $\infty$ meet at $q$ with multiplicity $1$ (i.e., $q$ strictly divides the denominator of $t_1$), $q$ is unramified in the specialization at $t_2$; and $t_1$ and $t_2$ are sufficiently close to each other modulo all further bad primes of $E$ (including $\infty$).
Following Remark \ref{rem:explicit_primes}, one may verify that the exceptional set $\mathcal{S}_0$ of primes in Theorem \ref{thm:spec_fctfd} may actually be taken as $\{2,3,5,7,11\}$ in this example. Furthermore all primes $q>11$ are unramified in {\it some} specialization (e.g., already the splitting field of $f(1,X)$ is unramified at all $q>11$). Thus, the condition on $q$ to be ``sufficiently large" may in fact be simplified to ``$q>11$".
 \end{example}

 \subsection{Alternating groups: Proof of Theorem \ref{thm:an_cond}}
\label{sec:an}

For the case of alternating groups, we can make use of strong results on regular extensions with prescribed fibers due to Mestre, and thus reach a stronger conclusion than in the previous cases.

The following special existence result on $A_n$-extensions with prescribed local conditions will suffice for our purposes.
\begin{lemma}
\label{lem:localglobal}
Let $K$ be a number field, $S = \{p_1,\cdots, p_r\}$ be a non-empty finite set of primes of $K$, such that $S\setminus\{p_1\}$ contains all archimedean primes of $K$. Let $n\ge 4$. 
Then there exist infinitely degree-$n$ extensions $F_1/K$ and $F_2/K$ with group $A_n$ fulfilling the following:
\begin{itemize}
\item[a)] The decomposition group at any prime extending $p_1$ in the Galois closure of $F_1/K$ is conjugate in $A_n$ to the Klein $4$-group on $\langle(1,2)(3,4), (1,3)(2,4)\rangle$, and the respective inertia subgroup is conjugate to $\langle (1,2)(3,4)\rangle$.
\item[b)] $p_i$ splits completely in $F_j/K$, for all $i=2,\dots, r$, $j\in \{1,2\}$ and $(i,j)\ne (1,1)$.
\end{itemize}
\end{lemma}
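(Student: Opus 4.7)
My plan is to combine a Mestre-type $K$-regular $A_n$-realization with a prescribed fiber, an explicit local $V_4$-extension of $\widehat{K_{p_1}}$ matching the data in (a), and a Krasner-plus-Hilbert deformation supplied by Proposition \ref{prop:hilbert_wa}.

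\textbf{Local model.} By weak approximation I choose $a,b\in K$ such that $v_{p_1}(a)=0$, $v_{p_1}(b)=1$, $a$ is a non-square unit modulo $p_1$, and $a,b\in (K_p^{\times})^2$ for every $p\in S\setminus\{p_1\}$. Set $\Omega:=K(\sqrt{a},\sqrt{b})$. Then $\Omega/K$ is a $V_4$-extension; its completion at $p_1$ is the $V_4$-extension of $\widehat{K_{p_1}}$ in which $\sqrt a$ is unramified and $\sqrt b$ is tamely totally ramified, whereas its completion at every other $p\in S$ is completely split. Identifying $\operatorname{Gal}(\Omega/K)=V_4$ with $\langle (1,2)(3,4),(1,3)(2,4)\rangle\subseteq A_n$ via the labeling
\begin{equation*}
1\leftrightarrow(+\sqrt a,+\sqrt b),\ 2\leftrightarrow(+\sqrt a,-\sqrt b),\ 3\leftrightarrow(-\sqrt a,+\sqrt b),\ 4\leftrightarrow(-\sqrt a,-\sqrt b),
\end{equation*}
the negation of $\sqrt b$ corresponds to $(1,2)(3,4)$ and the negation of $\sqrt a$ to $(1,3)(2,4)$, giving inertia $\langle (1,2)(3,4)\rangle$ and decomposition group $V_4$ at $p_1$, exactly as required. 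Adjoining $n-4$ auxiliary elements $c_5,\dots,c_n\in K$ (all distinct from each other and from the four $K$-embeddings of a primitive element of $\Omega$) produces the degree-$n$ \'etale algebra $E := \Omega\times K^{n-4}$, whose Galois closure is $\Omega$ with $V_4\subseteq A_n$ acting in the prescribed way.

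\textbf{Mestre realization with prescribed fiber.} Because $\operatorname{Gal}(\Omega/K)\subseteq A_n$, the discriminant of $E$ is a square in $K$, and Mestre's construction of $K$-regular $A_n$-realizations over $K(t)$ (see, e.g., the treatment of the $A_n$ case in \cite{MM}) produces a $K$-regular extension $L/K(t)$ with $\operatorname{Gal}(L/K(t))=A_n$ together with a value $t^*\in K$ such that the degree-$n$ root field of the (degenerate) specialization $L_{t^*}/K$ is $K$-algebra isomorphic to $E$. In particular, the completion of the Galois closure of $L_{t^*}/K$ at $p_1$ is precisely the $V_4$-extension $\widehat{\Omega_{p_1}}/\widehat{K_{p_1}}$, and the completion at every $p\in S\setminus\{p_1\}$ is completely split.

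\textbf{Deformation and linear disjointness.} Krasner's lemma implies that every specialization $L_{t_0}/K$ at $t_0\in K$ sufficiently $p$-adically close to $t^*$ for every $p\in S$ has the same completion at each $p\in S$ as $L_{t^*}/K$, hence realizes the prescribed decomposition and inertia data. Applying Proposition \ref{prop:hilbert_wa} to the finite set of local conditions just fixed, I obtain infinitely many such $t_0$ with $\operatorname{Gal}(L_{t_0}/K)=A_n$, giving pairwise linearly disjoint $A_n$-extensions of $K$; take $F_1$ to be the degree-$n$ root field of each such $L_{t_0}/K$. The argument for $F_2$ is identical, starting from $E := K^n$ (totally split at every prime of $S$, including $p_1$), which is the classical and easiest case of Mestre's theorem.

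\textbf{Main obstacle.} The technical crux is the appeal to Mestre's theorem in the form producing a $K$-regular $A_n$-cover of $\mathbb{P}^1_K$ whose degenerate fiber at a chosen $K$-rational point is $K$-algebra isomorphic to the prescribed \'etale algebra $E$ with Galois group landing inside $A_n$; the discriminant-square condition needed for Mestre's construction is automatic from the $V_4\subseteq A_n$ inclusion, but verifying that the resulting cover can be arranged to be $K$-regular with the prescribed fiber requires inspection of Mestre's parametric polynomial families. Once this is granted, the remaining steps (weak approximation, Krasner's lemma, Hilbert irreducibility with linear disjointness) are all subsumed by Proposition \ref{prop:hilbert_wa}.
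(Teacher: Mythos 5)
Your overall strategy coincides with the paper's: build a $V_4$-extension of $K$ with the prescribed local behaviour at $S$ (the paper quotes the Grunwald--Wang theorem where you construct $K(\sqrt a,\sqrt b)$ by hand via weak approximation), encode it as the degenerate fiber of a Mestre-type $K$-regular $A_n$-cover $f(X)-tg(X)$, and then deform by Krasner's lemma plus Proposition \ref{prop:hilbert_wa}. Two points, however, deserve attention.

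The substantive gap is the parity of $n$. Mestre's construction with a prescribed fiber (choose $g$ with $f'g-g'f$ a square, so that $f(X)-tg(X)$ has group $A_n$ over $K(t)$) is available only for \emph{odd} degree $n$; this is why the paper's proof splits into cases. For even $n$ you cannot simply invoke ``Mestre's parametric families'' as you do: the paper instead realizes $A_{n+1}$ (with $n+1$ odd) starting from the same $V_4$-extension padded with fixed points, observes that the degree-$(n+1)$ root field of $L/K(t)$ is a \emph{rational} function field $K(X)$, so that $L/K(X)$ is a $K$-regular $A_n$-extension (the point stabilizer acting naturally on the remaining $n$ points), and then specializes $X$ rather than $t$ near a root of $f$ lying in one of the padding factors. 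Without this descent step your argument only proves the lemma for odd $n$. A second, minor issue: your explicit local model breaks down when $p_1$ lies above $2$, since every unit in a residue field of characteristic $2$ is a square, so ``$a$ a non-square unit modulo $p_1$'' cannot produce the unramified quadratic extension there; you should instead approximate an element of $\widehat{K_{p_1}}^{\times}$ generating the unramified quadratic extension (or, as the paper does, just cite Grunwald--Wang for $V_4$, which handles all residue characteristics). With these two repairs your proof matches the paper's.
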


Theorem \ref{thm:an_cond} immediately follows from Lemma \ref{lem:localglobal},
together with Proposition~\ref{prop:basic}. It therefore remains to prove Lemma~\ref{lem:localglobal}.

\begin{proof}

First, let $n$ be odd. Clearly, there is a $V_4$-extension of $K$ (with $V_4$ the Klein $4$-group) with local behaviour in $S$ as prescribed for $F_1/K$ in a) and b), e.g., as a very special case of the Grunwald-Wang theorem (e.g., \cite[Theorem 9.2.8]{NSW}).
Denote this extension by $F/K$. Now a famous construction of Mestre \cite{Mes90} ensures the existence of an $A_n$-extension $L/K(t)$ which specializes to $F/K$ at $t\mapsto 0$ (and is unramified at $t\mapsto 0$). More precisely, $L/K(t)$ can be chosen as the splitting field of a polynomial $f(X)-tg(X)$ where $f$ is separable of degree $n$ with group $V_4$, and $\deg(g)<n$.
Now, specialize $L/K(t)$ at any value $t_0\in K\setminus\{0\}$ which is $p$-adically ``sufficiently" close to $0$ for all $p\in S$.
The assertion for the infinitely many fields $F_1/K$ then follows from compatibility of weak approximation with Hilbert's irreducibility theorem, as stated in Proposition \ref{prop:hilbert_wa}. 
To obtain the same for the fields $F_2/K$, simply repeat the above starting with the trivial extension $K/K$ instead of $F/K$.

Now let $n$ be even. Then choose an extension $L/K(t)$ as above for the group $A_{n+1}$, and note that by construction the fixed field of $L$ is a rational function field, say $K(X)$. Without loss, assume that $X\mapsto 0$ is a point extending $t\mapsto 0$ in $K(X)/K(t)$. Then specialization of $L/K(X)$ at $X\to 0$ yields a $V_4$-extension, and we may now specialize $X$ $p$-adically close to $0$ as above to reach the desired conclusion for $A_n$ with $n$ even, completing the proof.
\end{proof}

\begin{remark}
Note that while the extensions $L/K(t)$ constructed in the above proof depend on the concrete elliptic curve $E$, their genus does not, whence Remark \ref{rem:density} yields an asymptotic lower bound independent of $E$ for the number of $A_n$-extensions over which $E$ gains rank.
\end{remark}

\begin{example}
Given a concrete elliptic curve and a concrete $n\ge 5$, $A_n$ extensions as described above may be computed explicitly. To demonstrate this, let $E$ be the (rank $0$) elliptic curve over $\mathbb{Q}$ given by
$Y^2 = X(X-1)(X-5)$, and let $n=5$. This curve has good reduction outside $2$ and $5$, with local root numbers $W_2(E) = +1$ and $W_5(E) (=W_\infty(E)) = -1$. Following the proof of Lemma \ref{lem:localglobal}, we begin with a $V_4$-extension of $\mathbb{Q}$ in which the prime $5$ (of split multiplicative reduction type for $E$) has decomposition group $V_4$, and in which $2$ and $\infty$ split. For example, $\mathbb{Q}(\sqrt{65}, \sqrt{17})/\mathbb{Q}$ is such an extension, and one may verify easily that this is the splitting field of the (degree-$5$, increased artificially in order to embed into $A_5$) $V_4$-polynomial $f(X)=X(X^4-164X^2+2304)$. Mestre's construction guarantees the existence of a degree-$4$ polynomial $g(X)$ such that $f(X) - tg(X)$ has Galois group $A_5$ over $\mathbb{Q}(t)$, and one may compute explicitly that $g(X)=X^4-559076/12249X^2+2975047936/16670889$ is such a polynomial. (Concretely, $g(X)$ is found as a solution of $f'g - g'f = r^2$ for a suitable polynomial $r$, see \cite[Prop.\ 1]{Mes90}.) 
Therefore, a root field of $f(X)-t_0g(X)$ for any $0\ne t_0\in \mathbb{Q}$ sufficiently close to $0$ in the $2$-adic, $5$-adic and real absolute value (and with non-shrinking Galois group) may be used as a field $K_1$ as in Lemma \ref{lem:localglobal}. On the other hand, we may begin with the trivial extension (for example, as the splitting field of the degree-$5$ polynomial $X(X^2-1)(X^2-4)$). We then obtain the polynomial $X(X^2-1)(X^2-4) -t(X^4-73/29X^2+21316/21025)$ with Galois group $A_5$ over $\mathbb{Q}(t)$, and a root field of the specialization at any specialization value $t_0$ sufficiently close to $0$ in the $2$-adic, $5$-adic and real absolute value (and with non-shrinking Galois group) may be used as a field $K_2$ as in Lemma \ref{lem:localglobal}. By Proposition \ref{prop:basic}, the root numbers of $E$ over $K_1$ and over $K_2$ are different, whence $E$ gains rank over one of them. In fact, since the fields $K_1$ are chosen such two (resp., five) primes are extending the rational prime $5$ (resp., infinity), it follows  that 
$$W(E/K_1) = \underbrace{1^5}_{ \text{ contribution at } 2} \cdot \underbrace{(-1)^2}_{ \text{ contribution at } 5} \cdot \underbrace{(-1)^5}_{ \text{ contribution at } \infty} = -1,$$ so rank gain will occur over the fields $K_1$.

\end{example}

\subsection{About the elliptic curves satisfying the assumptions of the theorems}
\label{sec:density}

As shown in \cite[Theorem 8]{RvB}, 100 percent of elliptic curves over $\mathbb{Q}$ have at least one prime of multiplicative reduction, and thus fulfill the conclusion of Theorem \ref{thm:an_cond}. In fact, what is shown in \cite{RvB} is that for any finitely many prime numbers $p_1,\dots, p_r$, the proportion of elliptic curves which do not have multiplicative reduction at any $p_i$ is bounded from above by $\prod_{i=1}^r (1-1/p_i + o(1/p_i))$, see the proof of \cite[Corollary 28]{RvB}.

It then follows easily that the ``$100\%$ density" conclusion holds even for elliptic curves with at least one prime of multiplicative reduction inside a prescribed positive density subset of all prime numbers. In other words, Theorems \ref{thm:grunwald} and \ref{thm:psl} also apply to 100 percent of elliptic curves over $\mathbb{Q}$, ordered by height. It is reasonable to expect the same over arbitrary number fields, although a formal proof of such a statement would lead us too far away.

\section{A generalization to abelian varieties}
\label{sec:abvar}
Of course, it is natural to ask Question \ref{ques:weak} for abelian varieties of dimension $>1$ as well. Our methods generalize to this case as long as one has sufficiently explicit formulae linking the local root numbers of such varieties to such group-theoretical invariants as orbit numbers of decomposition groups etc.
We exhibit some cases in which this can be achieved.

\subsection{Semistable Jacobians}
For the following, see Section 5 of \cite{BKS}.

Let $K$ be a number field, $p$ be a non-archimedean prime of $K$ and $K_p$ the completion of $K$ at $p$. Denote the residue field of $K_p$ by $k$. Let $A:=J(C)$ be the Jacobian of a smooth geometrically irreducible proper curve over $K_p$, and assume that the reduction $\overline{C}$ of $C$ modulo $p$ is a stable curve over $k$, in the sense of \cite{DM}. Then, following \cite[Section 5]{BKS}, one has an explicit formula for the local root number $W_p(A)$ of $A$ at $p$. To state it, we need to recall some notation:

Let $\Sigma$ be the set of (nodal) singular points of $\overline{C}$ in $\overline{k}$, and let $\tilde{C}$ denote the normalization of $\overline{C}$. 
One has a birational morphism $\eta:\tilde{C}\to \overline{C}$, and due to the assumption of stable reduction, one has $|\eta^{-1}(\{z\})|=2$ for every $z\in \Sigma$. Now furthermore, for each $z\in \Sigma$, denote the $G_k$-orbit of $z$ by $[z]$, and the residue extension of $z$ over $k$ by $k(z)$. Denote the two points in $\eta^{-1}(\{z\})$ by $x$ and $y$, and define a sign $\tau({[z]})$ to be $+1$ or $-1$, depending on whether the power of Frobenius $\varphi^d\in G_{k(z)}$ (where $d:=[k(z):k]$) fixes or switches the two points $x$ and $y$ (clearly, this sign depends only on $[z]$ and not on $z$ itself). Lastly, let $\Sigma_k$ be the set of $G_k$-orbits on $\Sigma$, define $n_k = |\Sigma_k|$, and let $s_k$ be the number of irreducible $k$-components of $\tilde{C}$. 

\begin{prop}$($\cite[Proposition 5.4]{BKS}$)$
Under the above assumptions,
$$W_p(A) = (-1)^{n_k+s_k+1} \prod_{[z]\in \Sigma_k} \tau({[z]})  = (-1)^{s_k+1}\cdot \prod_{[z]\in \Sigma_k} (-\tau({[z]})).$$ 
\end{prop}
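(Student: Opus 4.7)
The proof I would give follows the strategy of BKS. The main input is the Raynaud uniformization of the Jacobian together with a clean formula for the local root number of a semistable abelian variety in terms of the Frobenius action on the character group of its toric part. Since $C$ has stable reduction, $A=J(C)$ is semistable, and the identity component of the special fibre of the Néron model fits in an exact sequence
$$0 \to T \to \mathrm{Pic}^0(\overline{C})^0 \to B \to 0,$$
where $B=\prod_v J(\tilde{C}_v)$ (product over irreducible components of $\tilde{C}$) has good reduction, and $T$ is a torus whose character group is canonically identified with $H_1(\Gamma,\mathbb{Z})$ for $\Gamma$ the dual graph of $\overline{C}$ (vertices $=$ components of $\tilde{C}$, edges $=$ points of $\Sigma$). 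This identification is $G_k$-equivariant once one fixes orientations on edges; changing orientation of an edge $z$ introduces precisely the sign $\tau([z])$ of the proposition.

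Next, I would invoke the root number formula for a semistable abelian variety: the Weil-Deligne representation attached to $A$ has as its ``toric layer" the representation of $G_k$ on $X^{\ast}(T)\otimes \mathbb{Q}_\ell$, and the abelian layer $V_\ell B$ is pure of weight $-1$ with trivial inertia. A standard computation (see, e.g., Rohrlich, or Section 3 of \cite{BKS}) then gives
$$W_p(A) \;=\; \det\bigl(-\mathrm{Frob}_k \mid X^{\ast}(T)\otimes \mathbb{Q}_\ell\bigr),$$
where the sign $-1$ in front of Frobenius absorbs the contribution of the abelian part together with the weight-shift coming from the monodromy filtration. So the problem is reduced to computing a Frobenius determinant on $H_1(\Gamma,\mathbb{Q}_\ell)$.

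For this computational step, I would use the short exact sequence of $G_k$-modules
$$0 \to H_1(\Gamma,\mathbb{Z}) \to \mathbb{Z}^{\mathrm{edges}} \xrightarrow{\partial} \mathbb{Z}^{\mathrm{vertices}} \to H_0(\Gamma,\mathbb{Z}) \to 0,$$
where $\partial$ is the boundary map using the chosen orientations. Taking $\det(-\mathrm{Frob}_k)$ on each term is a purely combinatorial computation: on the edge module the action decomposes into blocks indexed by orbits $[z]\in \Sigma_k$, each block contributing a factor $-\tau([z])$ (with the sign $\tau([z])$ from whether Frobenius preserves or flips the edge); on the vertex module the analogous computation gives $(-1)^{s_k}$; and $\det(-\mathrm{Frob}_k\mid H_0(\Gamma,\mathbb{Q}_\ell))=-1$, since $\overline{C}$ is connected. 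Multiplicativity of $\det(-\mathrm{Frob}_k)$ in short exact sequences then yields
$$W_p(A) \;=\; \det(-\mathrm{Frob}_k\mid H_1) \;=\; (-1)^{s_k+1}\prod_{[z]\in \Sigma_k} \bigl(-\tau([z])\bigr),$$
which is the claimed formula; the first equality $(-1)^{n_k+s_k+1}\prod \tau([z])$ is just a rearrangement using $n_k=|\Sigma_k|$.

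The main obstacle I expect is step two: justifying the clean identity $W_p(A)=\det(-\mathrm{Frob}_k\mid X^{\ast}(T)\otimes \mathbb{Q}_\ell)$ for semistable $A$. This requires unwinding the definition of the $\varepsilon$-factor of the Weil-Deligne representation and keeping careful track of the contributions of the monodromy operator $N$, the weight filtration, and the self-duality of $V_\ell A$ (responsible for turning the naive $\varepsilon$-factor into a sign). The combinatorial steps one and three, while a little bookkeeping-heavy, are routine once the correct orientation conventions have been fixed.
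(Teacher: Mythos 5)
The paper offers no proof of this proposition at all: it is imported verbatim as \cite[Proposition 5.4]{BKS}, so there is no internal argument to compare yours against. Your sketch is, in outline, the argument of the cited source: reduce to the toric part $T$ of the special fibre of the N\'eron model, identify $X^{*}(T)$ with $H_1$ of the dual graph of $\overline{C}$, and compute $\det(-\mathrm{Frob})$ via the chain complex $0\to H_1(\Gamma)\to \mathbb{Z}^{E}\to\mathbb{Z}^{V}\to H_0(\Gamma)\to 0$. The combinatorics checks out: an edge orbit of length $d(z)$ with terminal sign $\tau([z])$ is a signed cyclic block, so $\det(-F)=(-1)^{d(z)}\cdot(-1)^{d(z)-1}\tau([z])=-\tau([z])$; the vertex module contributes $(-1)^{s_k}$ and $H_0$ contributes $-1$ by connectedness, giving $(-1)^{s_k+1}\prod_{[z]}(-\tau([z]))$ as claimed. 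The only substantive dependency you leave unproved is the identity $W_p(A)=\det(-\mathrm{Frob}\mid X^{*}(T)\otimes\mathbb{Q}_\ell)$ for semistable $A$; you are right to flag it as the delicate step, but it is not something you could reasonably re-derive here --- it is itself the main analytic content of the earlier sections of \cite{BKS} (building on Sabitova's work on $\varepsilon$-factors of Weil--Deligne representations with nontrivial monodromy), and citing it is the appropriate resolution, just as the paper under review cites the final formula wholesale.
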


We can now state the following somewhat technical result, which may be seen as an analog of Proposition~\ref{prop:basic}.

\begin{lemma}
\label{lem:jac_techn}
Let $K$ be a number field and $G\le S_n$ be a transitive group, and assume that all of the following hold:
\begin{itemize}
\item[1)] There exist $G$-extensions $F_1/K$ and $F_2/K$ such that the parities of the number of primes extending $p$ in $F_1$ and in $F_2$ are different.
\item[2)] There exist $G$-extensions $F_1'/K$ and $F_2'/K$ such that no prime extending $p$ in $F_1'$ or in $F_2'$ has residue degree divisible by $4$, and such that the parities of the number of primes extending $p$ {\it of even residue degree} in $F_1'$ and in $F_2'$ are different.
\end{itemize}

Furthermore, let $C$ be a smooth geometrically irreducible curve over $K$ and $A=J(C)$ its Jacobian. 
For a prime $q$ of $K$ and a finite extension $L/K$, define $W_q(A\otimes L_i)$ to be the product of all local root numbers $W_{q'}(A\otimes L)$, where $q'$ runs through the primes extending $q$ in $L$.
\\
Assume there exists at least one non-archimedean prime $p$ of $K$ such that all of the following hold:
\begin{itemize}
\item[i)] The reduction $\overline{C}$ of $C$ at $p$ is a stable 
curve with an odd number of components, all absolutely irreducible.
\item[ii)] There exists exactly one $G_{k_p}$-orbit $[z]$ of singular points of $\overline{C}$.
\item[iii)] Either $\tau_p([z])=-1$, or $[z]$ is of odd residue degree over $k_p$.
%
\end{itemize}






Then there exist $G$-extensions $L_1/K$ and $L_2/K$ such that $W_p(A\otimes L_1)\ne W_p(A\otimes L_2)$.  
\end{lemma}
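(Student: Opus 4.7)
My plan is to apply \cite[Proposition 5.4]{BKS} place-by-place above $p$, use hypotheses (i)--(iii) to collapse the resulting product into a short parity expression in terms of the splitting of $p$ in $L_j$, and then invoke (1) or (2) to produce the desired pair $L_1, L_2$.

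First, I would compute $W_{q'}(A \otimes L_j)$ at a single prime $q' \mid p$ of $L_j$ with residue degree $f := f_{q'}$. Applying the cited formula to $A \otimes (L_j)_{q'}$ over the residue field $k_{q'}$: by (i), the absolutely irreducible components of $\tilde{C}$ remain irreducible over $k_{q'}$, so $s_{k_{q'}} = s_{k_p}$ is odd and the sign $(-1)^{s_{k_{q'}}+1}$ equals $+1$; by (ii), the unique $G_{k_p}$-orbit $[z]$ of nodes, of residue degree $d$, refines over $k_{q'}$ into exactly $g := \gcd(f,d)$ orbits $[z']$, each of residue degree $d/g$. A direct Frobenius-power computation based on the identity $\varphi_{k_{q'}}^{d/g} = (\varphi_{k_p}^{d})^{f/g}$ shows that $\tau_{q'}([z']) = \tau_p([z])^{f/g}$ for each such refined orbit, and hence
\[
W_{q'}(A \otimes L_j) = (-1)^{g}\, \tau_p([z])^{f}.
\]
Taking the product over all $q' \mid p$ then gives
\[
W_p(A \otimes L_j) = (-1)^{\sum_{q' \mid p} g_{q'}}\, \tau_p([z])^{\sum_{q' \mid p} f_{q'}}.
\]

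I would then split on (iii). If $\tau_p([z]) = +1$ (in which case (iii) forces $d$ to be odd, making every $g_{q'}$ odd), the formula reduces to $W_p(A \otimes L_j) = (-1)^{r(L_j)}$, where $r(L_j)$ is the number of primes of $L_j$ above $p$; condition (1) applied with $L_1 := F_1$, $L_2 := F_2$ then immediately produces distinct root numbers. If $\tau_p([z]) = -1$, a slightly more delicate $2$-adic analysis -- exploiting the $4 \nmid f_{q'}$ restriction of (2) to control the parity of $g_{q'} + f_{q'}$ -- reduces the total exponent in $W_p$ modulo $2$ to the count of primes of $L_j$ above $p$ of even residue degree, and condition (2) applied with $L_1 := F_1'$, $L_2 := F_2'$ then closes the argument.

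The main technical obstacle is the sign-transport identity $\tau_{q'}([z']) = \tau_p([z])^{f/g}$ in the first step: it requires carefully unwinding the definition of $\tau$ via Frobenius action on the two preimages of a node under the normalization map $\eta$, and tracking how a $\varphi_{k_p}$-orbit of size $d$ refines under restriction to the subgroup generated by $\varphi_{k_p}^{f}$. Once that identity is in place, the rest of the proof is a brief parity count, cleanly tailored to the formulations of (1) and (2).
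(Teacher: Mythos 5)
Your overall strategy is the same as the paper's: base-change the stable model, track how the unique Frobenius orbit $[z]$ of nodes refines over the residue field of each $q'\mid p$, transport the sign $\tau$, and reduce $W_p(A\otimes L_j)$ to a parity count that hypotheses (1) and (2) can then separate. Your sign-transport identity $\tau_{q'}([z'])=\tau_p([z])^{f/g}$ is correct (the action of Frobenius on the two preimages $\{x,y\}$ of a node factors through $S_2$, and $\varphi_{k_{q'}}^{d/g}=\varphi_{k_p}^{d\cdot (f/g)}$), and the resulting local formula $W_{q'}(A\otimes L_j)=(-1)^{g}\,\tau_p([z])^{f}$ is in fact more careful than the paper's own intermediate equation, which replaces the product $\prod_{[z']}\tau_{q'}([z'])$ over the $g$ refined orbits by a single factor $\tau_{q'}([z'])$. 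Your first case ($\tau_p([z])=+1$, hence $d$ odd by (iii), hence every $g_{q'}$ odd) is complete and matches the paper's case a).

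The gap is in the second case. Your formula gives $W_p(A\otimes L_j)=(-1)^{\sum_{q'}(g_{q'}+f_{q'})}$ when $\tau_p([z])=-1$, and the claimed reduction of this exponent modulo $2$ to the number of primes of even residue degree holds only when $d$ is odd: then every $g_{q'}$ is odd, so $g_{q'}+f_{q'}$ is odd exactly when $f_{q'}$ is even (and the restriction $4\nmid f_{q'}$ plays no role). If instead $d$ is even --- which hypothesis (iii) permits when $\tau_p([z])=-1$ --- then $g_{q'}+f_{q'}$ is even for every $q'$: for $f_{q'}$ odd both terms are odd, and for $f_{q'}$ even one has $g_{q'}=\gcd(f_{q'},d)$ even as well. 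Hence $W_p(A\otimes L)=+1$ for every extension $L$, no choice of $L_1,L_2$ separates the local root numbers, and the ``delicate $2$-adic analysis'' you defer cannot exist in this sub-case. The correct dichotomy is therefore on the parity of $d$ rather than on the sign of $\tau_p([z])$. (The paper's own argument runs into the same difficulty, precisely because of the missing exponent $g$ noted above; in its applications $[z]$ has residue degree $1$, so the problematic sub-case does not arise there. Your writeup should either assume $d$ odd or explicitly set aside the sub-case $\tau_p([z])=-1$ with $d$ even.)
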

\begin{proof}
For a finite extension $L_1$ of $K$, let $\mathfrak{q}$ be a prime extending $p$ in $L_1$. Denote by $k_{\mathfrak{q}}$ and $k_p$ the residue fields at $\mathfrak{q}$ and $p$ respectively, and let $d=[k_{\mathfrak{q}}:k_p]$ be the residue degree. The stable model $\overline{C}$ is well-known to be invariant under base change, and so any $k_p$-orbit of singular points $[z]\in \Sigma_{k_p}$ of $\overline{C}$ of degree $d(z)$ decomposes into $\gcd(d,d(z))$ $k_{\mathfrak{q}}$-orbits $[z']$ of singular points, all of which are of degree $\frac{d(z)}{\gcd(d,d(z))}$.
Set $\omega({[z]}):= - \tau_{[z]}$, and let $\omega_{\mathfrak{q}}([z])$ be the product of all $\omega([z'])$, with $[z']$ a $k_{\mathfrak{q}}$-orbit dividing $[z]$ as above. Then \begin{equation}
    \label{eq:1}
\omega_{\mathfrak{q}}([z]) = (-1)^{\gcd(d,d(z))} \cdot \tau_{\mathfrak{q}}([z']),\end{equation}
where $[z']$ is any fixed $k_{\mathfrak{q}}$-orbit dividing $[z]$. 
Let $x$ be a point over $z$ in the cover $\tilde{C}\to \overline{C}$ as above, and denote the corresponding residue extension by $k_p(x)/k_p(z)$.
We have the following possibilities for the pair $(\tau([z]), \tau_{\mathfrak{q}}([z']))$:

\begin{equation}
\label{eq:2}
(\tau([z]), \tau_{\mathfrak{q}}([z'])) = \begin{cases}
(-1,1), \text{ if } [k_p(x):k_p(z)] =2 \text{ and } d \text{ is divisible by the maximal 2-power } \\ \hspace{30mm}\ 2^a \text{ dividing } 2d(z)=[k_p(x):k_p],\\
(-1,-1), \text{ if } [k_p(x):k_p(z)] =2 \text{ and } d \text{ is not divisible by } 2^a,\\
(1,1), \text{ if } k_p(x)=k_p(z).
\end{cases}\end{equation}
Including the factor $(-1)^{\gcd(d,d(z))}$ and going through all possible combinations of $\omega([z])$, $d$ and $d(z)$, we get from \eqref{eq:1} and \eqref{eq:2} that



\begin{itemize}
\item[a)] If $\omega([z]) = -1$, then $\omega_{\mathfrak{q}}([z]) =1$ if and only if $d$ and $d(z)$ are both even.
\item[b)] If $\omega([z]) = +1$, then $\omega_{\mathfrak{q}}([z]) =-1$ if and only if $d$ is even, and $d(z)$ either odd or divisible by $2$ at least as often as $d$ is.
\end{itemize}

Assume now additionally that $\overline{C}$ has an odd number of components, all absolutely irreducible, i.e., $(-1)^{s_{k_p}} = (-1)^{s_{k_\mathfrak{q}}}=-1$; and that there exists only a single $k_p$-orbit $[z]$ of singular points. Then in case a) above, $W_p(A) = -1$. 
If additionally the degree $d(z)$ is odd (which we may assume for case a) due to condition iii)) , then simply $\prod_{\mathfrak{q}|p} W_{\mathfrak{q}} (A\otimes L_i) = (-1)^{m}$ where $m$ is the total number of primes extending $p$ in $L_i$.

To deal with case b) above more efficiently, we make the additional assumption none of the primes extending $p$ in $L_i$ are of residue degree divisible by $4$ (in which case the condition on $d(z)$ in Case b) is automatically fulfilled). One then has $W_p(A) = 1$; and $\prod_{\mathfrak{q}|p} W_{\mathfrak{q}} (A\otimes L_i) = (-1)^{m_1}$, where $m_1$ is the number of primes extending $p$ of even residue degree.

In particular, if for any given one of the above values $j\in \{m, m_1\}$, we manage to choose $j$ of different parity in $L_1$ and in $L_2$, then $\prod_{\mathfrak{q}|p} W_{\mathfrak{q}} (A\otimes L_1) \ne \prod_{\mathfrak{q}|p} W_{\mathfrak{q}} (A\otimes L_2)$. Due to Assumptions 1) and 2), this can be achieved in either case. This concludes the proof.
%
%
%
%
%
%
%
\end{proof}


\begin{theorem}
\label{thm:semist_jac}
Assume the parity conjecture for abelian varieties. Let $C/K$ be a curve over a number field $K$ fulfilling the assumptions i), ii) and iii) of Lemma~\ref{lem:jac_techn}. Then 
\begin{itemize}
    \item[a)] For each  integer $n\ge 4$, there exist infinitely many $A_n$-extensions over which $J(C)$ gains rank.
    \item[b)] If additionally $K$ does not contain $\sqrt{-1}$, then for any prime $\ell \equiv 3 \pmod 8$, the following holds: if the prime $p$ in Lemma \ref{lem:jac_techn} is of sufficiently large norm (depending on $\ell$) and inert in $K(\sqrt{-1})$, then there exist infinitely many (degree $\ell+1$) $PSL_2(\ell)$-extensions of $K$ over which $J(C)$ gains rank.
    \end{itemize}
    \end{theorem}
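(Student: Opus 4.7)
My plan is to reduce both parts to Lemma~\ref{lem:jac_techn} by producing, for each target group $G$, two $G$-extensions with precisely controlled local behavior at the prime $p$ supplied by the curve hypotheses, together with identical local behavior at all other primes in the finite set $S$ containing the archimedean places and the primes of bad reduction of $J(C)$. The resulting local root number difference at $p$, combined with matching contributions everywhere else (and trivial contributions at the remaining primes of good reduction), gives a global root number difference, hence rank gain via the parity conjecture. Weak approximation compatible with Hilbert's irreducibility theorem (Proposition~\ref{prop:hilbert_wa}) then turns one such pair into an infinite, pairwise linearly disjoint family.

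For part a), I would invoke Mestre's construction via Lemma~\ref{lem:localglobal} with $p_1:=p$: this produces infinitely many linearly disjoint pairs $F_1/K,F_2/K$ of $A_n$-extensions in which $p$ has decomposition group $V_4=\langle(1,2)(3,4),(1,3)(2,4)\rangle$ with inertia $\langle(1,2)(3,4)\rangle$ in $F_1$ and splits completely in $F_2$, while every other prime of $S$ splits completely in both. A direct orbit count then shows $F_1$ has $n-3$ primes above $p$ (one of residue degree $2$, the rest unramified of degree $1$) and $F_2$ has $n$ such primes (all of degree $1$). Thus both conditions 1) (prime counts differing by $3$, hence in different parity classes) and 2) (exactly $1$ vs.\ $0$ primes of even residue degree, and no residue degree divisible by $4$) of Lemma~\ref{lem:jac_techn} hold simultaneously.

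For part b), I would use the $K$-regular $PSL_2(\ell)$-realization $L'/K(t)$ from the proof of Theorem~\ref{thm:psl_strong}. Theorem~\ref{thm:spec_fctfd} produces a specialization $L_1/K$ with inertia $I$ of order $2$ at $p$ and decomposition group $H=\langle I,x\rangle$ of order divisible by $4$, sitting inside the dihedral involution-centralizer of order $\ell+1$ (acting regularly on $\ell+1$ points). Writing $|H|=4d$ with $d\mid(\ell+1)/4$, the congruence $\ell\equiv 3\pmod 8$ forces $(\ell+1)/4$, and hence $d$, to be odd; this yields $(\ell+1)/|H|=(\ell+1)/(4d)$ primes above $p$ (odd in number), each of residue degree $|H|/2=2d\equiv 2\pmod 4$. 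Pairing $L_1$ with a specialization $L_2/K$ that is unramified and split at $p$ (available via Theorem~\ref{thm:dg}) gives $\ell+1$ primes (even in number) of residue degree $1$, so again both conditions of Lemma~\ref{lem:jac_techn} hold while no residue degree is divisible by $4$. Proposition~\ref{prop:hilbert_wa} then pins down identical local behavior at $S\setminus\{p\}$ and produces the infinite linearly disjoint family.

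The main technical point is the orbit-counting verification for condition 2) of Lemma~\ref{lem:jac_techn}, particularly in the $PSL_2$ case: the congruence $\ell\equiv 3\pmod 8$ is precisely what forces every subgroup of $D_{(\ell+1)/2}$ of order divisible by $4$ to produce residue degrees $\equiv 2\pmod 4$, thereby keeping us within the ``no residue degree divisible by $4$'' constraint that the lemma demands. Once these conditions are checked and the identical off-$p$ local behavior is arranged, Lemma~\ref{lem:jac_techn} delivers $W_p(J(C)\otimes F_1)\ne W_p(J(C)\otimes F_2)$ (respectively for $L_1,L_2$); matching contributions elsewhere promote this to a global root number inequality, and the parity conjecture yields the claimed rank gain over one member of each pair, with infinitely many such pairs provided by Proposition~\ref{prop:hilbert_wa}.
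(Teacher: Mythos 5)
Your proposal follows the paper's proof essentially verbatim: both reduce to Lemma~\ref{lem:jac_techn} by reusing the extensions constructed for Theorem~\ref{thm:an_cond} (via Lemma~\ref{lem:localglobal}/Mestre) and for Theorem~\ref{thm:psl_strong}, verifying conditions 1) and 2) at the prime $p$ and matching local behavior at all other relevant primes via Proposition~\ref{prop:hilbert_wa}. Your orbit counts are in fact slightly more careful than the paper's own (e.g., $n$ rather than ``$n-2$'' primes above $p$ in the completely split case, and residue degree $2d\equiv 2\pmod 4$ rather than exactly $2$ in the $PSL_2(\ell)$ case), but the argument is the same.
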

\begin{proof}
It suffices to verify that the extensions constructed in the proofs of Theorem \ref{thm:an_cond} and Theorem \ref{thm:psl_strong}, are such that they fulfill conditions 1) and 2) of Lemma \ref{lem:jac_techn} for the prescribed prime $p$ (and for $G=A_n$ and $G=PSL_2(\ell)$ respectively). Indeed, the proofs yielded (via using Proposition \ref{prop:hilbert_wa}) that additionally the local behaviors at a prescribed finite set of primes outside of $p$ could be demanded to remain unchanged, whence the root number $W(J(C)\otimes L_i)$ is controlled entirely by the local root number $W_p(J(C)\otimes L_i)$; the assertion then follows from Lemma \ref{lem:jac_techn}.

For $G=A_n$, recall that we could choose $L_1/K$ to have decomposition group $D=\langle(1,2)(3,4)$, $(1,3)(2,4)\rangle\le A_n$ and inertia group $I=\langle (1,2)(3,4)\rangle$ at $p$, and $L_2/K$ such that $p$ splits completely in $L_2$. Then $L_1/K$ has exactly one prime of residue degree $2$ extending $p$ (namely corresponding to the orbit $\{1,2,3,4\}$ of $D$), whereas $L_2$ has none. All other primes over $p$ are of residue degree $1$, and their number is $n-4$ in $L_1$ and $n-2$ in $L_2$. This shows that assumptions 1) and 2) of Lemma~\ref{lem:jac_techn} are fulfilled, whence $W_p(J(C)\otimes L_1) \ne W_p(J(C)\otimes L_2)$. 

For $G=PSL_2(\ell)$, recall that we could choose $L_1/K$ such that an odd number of primes extend $p$ in $L_1$, all having residue degree $2$ (since the Frobenius of the maximal unramified subextension was chosen to be an (automatically fixed point free) involution), and again $L_2/K$ such that $p$ splits completely (into an even number of primes, namely $\ell+1$). This again shows that the assumptions of Lemma~\ref{lem:jac_techn} are fulfilled, concluding the proof.
%
%
\end{proof}

\subsection{Hyperelliptic Jacobians: proof of Theorem \ref{thm:hyper}}
We deduce Theorem \ref{thm:hyper} from Theorem \ref{thm:semist_jac}.
\begin{proof}
Let $k=\mathbb{F}_p$. Under our assumptions, the reduction of $C$ at $p$ is birational to $Y^2=g(X)$, and in particular absolutely irreducible. We thus have  $s_{\tilde{k}}=1$ for any finite extension $\tilde{k}$ of $k$. Also $n_k=1 = n_{\tilde{k}}$, since there is only one singularity (namely the ordinary double point at $X=a$). 
Thus all assumptions i), ii) and iii) of Lemma~\ref{lem:jac_techn} are fulfilled, whence the assertion follows from Theorem~\ref{thm:semist_jac}.
\end{proof}

Note that the assumption in Theorem \ref{thm:hyper} on $f$ to have only one double root modulo $p$ for {\it some} $p$ is not strong in the sense that it is fulfilled as soon as the discriminant of $f$ has at least one simple prime divisor, which in term is fulfilled by 100 percent of hyperelliptic curves in a well-defined density sense, see \cite{RvB}.

\begin{thebibliography}{9}
\bibitem{AGP} S.\ Arias-de-Reyna, Sara, W.\ Gajda, S.\ Petersen, \textit{Abelian varieties over finitely generated fields and the conjecture of Geyer and Jarden on torsion}. Math. Nachr. 286 (2013), no. 13, 1269--1286.
\bibitem{BG} Y.\ Bilu, J.\ Gillibert, \textit{Chevalley-Weil theorem and subgroups of class groups}. Isr.\ J.\ Math.\ 226(2) (2018), 927--956.
\bibitem{RvB} R.\ van Bommel, \textit{Almost all hyperelliptic Jacobians have a bad semi-abelian prime}. Masters thesis, Algant, 2014.
\bibitem{Magma}
W.~Bosma, J.~Cannon, C.~Playoust,
\textit{The Magma algebra system. I. The user language}.
J.\ Symbolic Comput.\ 24(3-4) (1997), 235--265.
\bibitem{BKS} A.\ Brumer, K.\ Kramer, M.\ Sabitova, \textit{Explicit determination of root numbers of abelian varieties}. Trans. Amer. Math. Soc.\ 370 (4) (2018), 2589--2604.
\bibitem{CJ} G.\ Coogan, J.\ Jimenez-Urroz, \textit{Mordell-Weil ranks of quadratic twists of pairs of elliptic curves}. J.\ Number Theory
96 (2) (2002) 388--399.
\bibitem{DFK04}
S.\ David, J.\ Fearnley, H.\ Kisilevsky, \textit{On the vanishing of twisted $L$-functions of elliptic curves}. Experiment.\ Math.\ 13 (2004), no.\ 2, 185--198.
\bibitem{DG12} P.\ D\`ebes, N.\ Ghazi, \textit{Galois covers and the Hilbert-Grunwald property}. Ann.\ Inst.\ Fourier
(Grenoble) 62, no.\ 3 (2012), 989--1013.
\bibitem{DM} P.\ Deligne and D.\ Mumford, \textit{The irreducibility of the space of curves of given genus}. Inst.\ Hautes \'Etudes Sci.\ Publ.\ Math.\ 36 (1969), 75--109.
\bibitem{Dokch} V.\ Dokchitser, \textit{Root numbers of non-abelian twists of elliptic curves}. Proc.\ London Math.\ Soc. (3) 91 (2005), 300--324.
\bibitem{DD2} T.\ Dokchitser, V.\ Dokchitser, \textit{Elliptic curves with all quadratic twists of positive rank}. Acta Arith.\ 137 (2009), no.\ 2, 193--197.
\bibitem{Huppert} B.\ Huppert, \textit{Endliche Gruppen I}. Grundlehren der Mathematischen Wissenschaften (134). Springer Verlag. Berlin-Heidelberg-New York, 1967.
\bibitem{I06}  B.-H.\ Im, \textit{Mordell-Weil groups and the rank of elliptic-curves over large fields}. Canad. J. Math. 58 (2006), no. 4, 796--819.
\bibitem{I}  B.-H.\ Im, \textit{Positive rank quadratic twists of four elliptic curves}. J.\ Number Theory,  133  (2013),  no. 2, 492--500. 
 \bibitem{ILa08}  B.-H.\ Im, M.\ Larsen, \textit{Abelian varieties over cyclic fields}. Amer. J. Math. 130 (2008), no. 5, 1195--1210.
\bibitem{ILo} B.-H.\ Im, A.\ Lozano-Robledo, \textit{On products of quadratic twists and ranks of elliptic curves over large fields}. J.\ Lond.\ Math.\ Soc.\ (2)  79  (2009),  no. 1, 1--14.
\bibitem{IW18} B.-H.\ Im, E.\ Wallace, \textit{Rank gain of Jacobian varieties over finite Galois extensions}. J.\ Number Theory 184 (2018), 68--84.
\bibitem{JLY} C.U. Jensen, A. Ledet, N. Yui, \textit{Generic polynomials: Constructive aspects of the Inverse Galois Problem}. MSRI Publications Vol. 45, Cambridge University Press, 2002.
\bibitem{KLN19} J.\ K\"onig, F.\ Legrand, D.\ Neftin, \textit{On the local behavior of specializations of function field extensions}. Int.\ Math.\ Res.\ Not.\ IMRN Vol.\ 2019 (2019), 2951--2980.
\bibitem{KN20} J.\ K\"onig, D.\ Neftin, \textit{The local dimension of a finite group over a number field}. Preprint (2020). \texttt{https://arxiv.org/abs/2007.05383}.
\bibitem{LOT} R.J.\ Lemke-Oliver, F.\ Thorne, \textit{Rank growth of elliptic curves in non-Galois extensions}.  Int.\ Math.\ Res.\ Not.\ IMRN (2019).  \texttt{https://doi.org/10.1093/imrn/rnz307}.
\bibitem{Malle} G.\ Malle, \textit{Polynomials for primitive nonsolvable permutation groups of degree $d\le 15$}. J.\ Symbol.\ Comput.\ 4 (1987), 83--92.
\bibitem{MM} G.\ Malle, B.H.\ Matzat, \textit{Inverse Galois Theory}. 2nd edition. Springer Verlag, 2018.
\bibitem{MR2} B.\ Mazur, K.\ Rubin, \textit{Ranks of twists of elliptic curves and Hilbert's tenth problem}. Invent.\ Math.  181  (2010),  no. 3, 541--575.
\bibitem{MR20} B.\ Mazur, K.\ Rubin, \textit{Arithmetic conjectures suggested by the statistical behavior of modular symbols}. \texttt{arXiv:1910.12798v4}, Preprint (2020)
\bibitem{Mes90} J.-F.\ Mestre, \textit{Extensions r\'eguli\`eres de $\mathbb{Q}(T)$ de groupe de galois $\tilde{A_n}$ }. J.\ Algebra 131(2) (1990),
483--495.
\bibitem{NSW} J.\ Neukirch, A.\ Schmidt, K.\ Wingberg, \textit{Cohomology of Number Fields}. Springer Verlag,
Berlin, 2000.
\bibitem{PV} B.\ Plans, N.\ Vila, \textit{Galois covers of $\mathbb{P}^1$ over $\mathbb{Q}$ with prescribed local or global behavior by specialization}. J.\ Th\'eor.\ Nombres Bordeaux, 17(1) (2005), 271--282.
\bibitem{Roh} D.E.\ Rohrlich, \textit{Variation of the root number in families of elliptic curves}. Compositio Math.\ 87 (1993), no.\ 2, 119--151.
\bibitem{RS} K.\ Rubin, A.\ Silverberg, \textit{Rank frequencies for quadratic twists of elliptic curves}. Experiment. Math.  10  (2001),  no. 4, 559--569.
\bibitem{Saltman} D.J.\ Saltman, \textit{Generic Galois extensions and problems in field theory}. Advances in Mathematics 43 (1982), 250--283.
\bibitem{Silverman} J.H.\ Silverman, \textit{The Arithmetic of Elliptic Curves}. 2nd edition. Graduate Texts in Mathematics 106. Springer, 2009.
\end{thebibliography}
\end{document}